\theoremstyle{plain}
\newtheorem{theorem}{Theorem}[section]
\newtheorem{lemma}[theorem]{Lemma}
\newtheorem{corollary}[theorem]{Corollary}
\newtheorem{proposition}[theorem]{Proposition}
\theoremstyle{definition}
\newtheorem{question}[theorem]{Question}
\newtheoremstyle{TheoremNum}
	{\topsep}{\topsep}              
  {\itshape}                      
  {}                              
  {\bfseries}                     
  {.}                             
  { }                             
  {\thmname{#1}\thmnote{ \bfseries #3}}
\newtheorem{remark}{Remark}
\newcommand{\F}{\mathbb F}
\newcommand{\K}{\mathbb K}
\newcommand{\cD}{\mathcal D}
\newcommand{\cA}{\mathcal A}
\newcommand{\cT}{\mathcal T}
\newcommand{\cG}{\mathcal G}
\newcommand{\cL}{\mathcal L}
\newcommand{\cH}{\mathcal H}
\newcommand{\cS}{\mathcal S}
\newcommand{\cM}{\mathcal M}
\newcommand{\cB}{{\mathcal B}}
\newcommand{\cX}{{\mathcal X}}
\newcommand{\cY}{{\mathcal Y}}
\newcommand{\cN}{\mathcal N}
\newcommand{\bu}{\mathbf u}
\newcommand{\cC}{\mathcal C}
\newcommand{\cU}{{\mathcal U}}
\newcommand{\cV}{{\mathcal V}}
\newcommand{\cW}{{\mathcal W}}
\newcommand{\Aut}{\mathrm{Aut}}
\newcommand{\GL}{\mathrm{GL}}
\newcommand{\PG}{\mathrm{PG}}
\newcommand{\Tr}{ \ensuremath{ \mathrm{Tr}}}
\newcommand{\N}{ \ensuremath{ \mathrm{N}}}
\newcommand{\lp}[2]{\mathscr{L}_{(#1,#2)}}
\newcommand{\RN}[1]{%
  \textup{\uppercase\expandafter{\romannumeral#1}}%
}
\newcommand{\rn}[1]{%
  \textup{\lowercase\expandafter{\romannumeral#1}}%
}
\newcommand{\la}{\langle}
\newcommand{\ra}{\rangle}
 \def\zhou#1 {\fbox {\footnote {\ }}\ \footnotetext { From Yue: {\color{red}#1}}}
\begin{document}
	\title[MRD codes with maximum idealisers]{MRD codes with maximum idealisers}
	\author[B.\ Csajb\'ok]{Bence Csajb\'ok\textsuperscript{\,1}}
	\thanks{
	The	research  was supported by the Italian National
	Group for Algebraic and Geometric Structures and their Applications (GNSAGA
	- INdAM) and by  the project ``VALERE: VAnviteLli pEr la RicErca" of the University of Campania ``Luigi Vanvitelli''.
	The first author is supported by the J\'anos Bolyai
	Research Scholarship of the Hungarian Academy of Sciences 
	and partially by OTKA Grant No. PD 132463 and OTKA Grant No. K 124950.
	The fourth author is supported by the National Natural Science Foundation of China (No.\ 11771451)}
	\address{\textsuperscript{1}MTA--ELTE Geometric and Algebraic Combinatorics Research Group\\
		ELTE E\"otv\"os Lor\'and University, Budapest, Hungary\\
		Department of Geometry\\
		1117 Budapest, P\'azm\'any P.\ stny.\ 1/C, Hungary}
	\email{csajbokb@cs.elte.hu}
	\author[G.\ Marino]{Giuseppe Marino\textsuperscript{\,2,\,3}}
	\address{\textsuperscript{2}Dipartimento di Matematica e Applicazioni ``Renato Caccioppoli"\\
		Università degli Studi di Napoli ``Federico II"\\
		Via Cintia, Monte S.Angelo I-80126 Napoli, Italy}
	\email{giuseppe.marino@unina.it}
	\address{\textsuperscript{3}Dipartimento di Matematica e Fisica\\
		Universit\`a degli Studi della Campania ``Luigi Vanvitelli''\\
		Viale Lincoln 5, I-\,81100 Caserta, Italy}
	\email{olga.polverino@unicampania.it}
	\author[O.\ Polverino]{Olga Polverino\textsuperscript{\,3}}
	\author[Y.\ Zhou]{Yue Zhou\textsuperscript{\,4}}
	\address{\textsuperscript{4}College of Liberal Arts and Sciences, National University of Defense Technology, 410073 Changsha, China}
	\email{yue.zhou.ovgu@gmail.com}
	
	\begin{abstract}
		Left and right idealisers are important invariants of linear rank-distance codes. In the case of maximum rank-distance (MRD for short) codes in $\F_q^{n\times n}$ the idealisers have been proved to be isomorphic to finite fields of size at most $q^n$. Up to now, the only known MRD codes with maximum left and right idealisers are generalized Gabidulin codes, which were first constructed in 1978 by Delsarte and later generalized by Kshevetskiy and Gabidulin in 2005. In this paper we classify MRD codes in $\F_q^{n\times n}$ for $n\leq 9$ with maximum left and right idealisers and connect them to Moore-type matrices. Apart from generalized Gabidulin codes, it turns out that there is a further family of rank-distance codes providing MRD ones with maximum idealisers for $n=7$, $q$ odd and for $n=8$, $q\equiv 1 \pmod 3$. These codes are not equivalent to any previously known MRD code. Moreover, we show that this family of rank-distance codes does not provide any further examples for $n\geq 9$.
	\end{abstract}
	\maketitle

\section{Introduction}\label{sec:intro}
For two positive integers $m$ and $n$ and for a field $\K$, let $\K^{m\times n}$ denote the set of all $m\times n$ matrices over $\K$. The \emph{rank metric} or the \emph{rank distance} on $\K^{m\times n}$ is defined by
\[d(A,B)=\mathrm{rank}(A-B),\]
for any $A,B\in \K^{m\times n}$.

A subset $\cC\subseteq \K^{m\times n}$ with respect to the rank metric is usually called a \emph{rank-metric code} or a \emph{rank-distance code}. When $\cC$ contains at least two elements, the \emph{minimum distance} of $\cC$ is given by
\[d(\cC)=\min_{A,B\in \cC, A\neq B} \{d(A,B)\}.\]
When $\cC$ is a $\K$-linear subspace of $\K^{m\times n}$, we say that $\cC$ is a $\K$-linear code and its dimension $\dim_{\K}(\cC)$ is defined to be the dimension of $\cC$ as a subspace over $\K$.

Let $\F_q$ denote the finite field of $q$ elements. For any $\cC\subseteq \F_q^{m\times n}$ with $d(\cC)=d$, it is well-known that
\[\#\cC\le q^{\max\{m,n\}(\min\{m,n\}-d+1)},\]
which is a Singleton like bound for the rank metric; see \cite{delsarte_bilinear_1978}. When equality holds, we call $\cC$ a \emph{maximum rank-distance} (MRD for short) code. More properties of MRD codes can be found in \cite{delsarte_bilinear_1978}, \cite{gabidulin_MRD_1985}, \cite{gadouleau_properties_2006}, \cite{morrison_equivalence_2014} and \cite{ravagnani_rank-metric_2015}.

Rank-metric codes, in particular MRD codes, have been studied since the 1970s and have seen much interest in recent years due to a wide range of applications including storage systems~\cite{roth_1991_Maximum}, cryptosystems~\cite{gabidulin_public-key_1995}, spacetime codes~\cite{lusina_maximum_2003} and random linear network coding~\cite{koetter_coding_2008}.

In finite geometry, there are several interesting structures, including quasifields, semifields, splitting dimensional dual hyperovals and maximum scattered subspaces,  which can be equivalently described as special types of rank-distance codes; see \cite{csajbok_maximum_2017}, \cite{dempwolff_dimensional_2014}, \cite{dempwolff_orthogonal_2015}, \cite{sheekey_new_2016}, \cite{taniguchi_unified_2014} and the references therein. In particular, a finite quasifield corresponds to an MRD code in $\F_q^{n\times n}$ of minimum distance $n$ and a finite semifield corresponds to an MRD code that is a subgroup of $\F_q^{n\times n}$ (see \cite{de_la_cruz_algebraic_2016} for the precise relationship). Many essentially different families of finite quasifields and semifields are known  \cite{johnson_handbook_2007}, \cite{lavrauw_semifields_2011}, which yield many inequivalent MRD codes in $\F_q^{n\times n}$ of minimum distance $n$.

There are several slightly different definitions of equivalence of rank-distance codes. In this paper, we use the following notion of equivalence.
	
	Two rank-distance codes $\cC_1$ and $\cC_2$ in $\K^{m\times n}$ are \emph{equivalent} if there exist $A\in\GL_m(\K)$, $B\in \GL_n(\K)$, $C\in\K^{m\times n}$ and $\rho\in\Aut(\K)$ such that
	\begin{equation}\label{eq:def_equiv}
	\cC_2=\{AM^{\rho}B+C \colon M \in\cC_1\}.
	\end{equation}
	
	The \emph{adjoint code} of a rank-metric code $\cC$ in $\K^{m\times n}$ is
	\[\cC^\top :=\{M^T\in \K^{n\times m}\colon M\in \cC  \},\]
	where $(\,.\,)^T$ denotes transposition. If $\cC$ is a linear MRD code then $\cC^\top$ is also a linear MRD code. For $m=n$, if $\cC_2$ is equivalent to $\cC_1$ or $\cC_1^\top$, then $\cC_1$ and $\cC_2$ are called \emph{isometrically equivalent}. An equivalence map from a rank-distance code $\cC$ to itself is also called an \emph{automorphism} of $\cC$.

When $\cC_1$ and $\cC_2$ are both additive and equivalent, it is not difficult to show that we can choose $C=0$ in \eqref{eq:def_equiv}.

In general, it is a difficult job to tell whether two given rank-distance codes are equivalent or not. There are several invariants which may help us distinguish them.
Given a $\K$-linear rank-distance code $\cC\subseteq \K^{m\times n}$, following \cite{liebhold_automorphism_2016} its \emph{left} and \emph{right idealisers} are defined as
\[L(\cC) =\{M\in\K^{m\times n} \colon MC\in \cC \text{ for all }C\in \cC  \},\]
and
\[R(\cC) =\{M\in\K^{m\times n} \colon CM\in \cC \text{ for all }C\in \cC  \},\]
respectively. The left and right idealisers can be viewed as a natural generalization of the middle and right nucleus of semifields \cite{lunardon_kernels_2017} and some authors call them in this way.
In general, we can also define the left nucleus of $\cC$ which is another invariant for semifields. However, for MRD codes in $\F_q^{m\times n}$ with minimum distance less than $\min \{m,n\}$, the left nucleus is always $\F_q$ which means that it is not a useful invariant; see \cite{lunardon_kernels_2017}.


The \emph{Delsarte dual code} of an $\F_q$-linear code $\cC\subseteq \F_q^{m\times n}$ is
\[\cC^\perp :=\{M\in \F_q^{m\times n} \colon \Tr(MN^T)=0 \text{ for all } N\in \cC  \}.\]
If $\cC$ is a linear MRD code then $\cC^\perp$ is also a linear MRD code as it was proved by Delsarte \cite{delsarte_bilinear_1978}.


Two MRD codes in $\F_q^{n\times n}$ with minimum distance $n$ are equivalent if and only if the corresponding semifields are isotopic \cite[Theorem 7]{lavrauw_semifields_2011}.
In contrast, it appears to be much more difficult to obtain inequivalent MRD codes in $\F_q^{n\times n}$ with minimum distance strictly less than $n$.
We divide the known constructions of inequivalent MRD codes in $\F_q^{n\times n}$ of minimum distance strictly less than $n$ into two types.
\begin{enumerate}
	\item The first type of constructions consists of MRD codes of minimum distance $d$ for arbitrary $2\le d\le n$.
	\begin{itemize}
		\item The first construction of MRD codes which was given by Delsarte~\cite{delsarte_bilinear_1978} and later rediscovered by Gabidulin~\cite{gabidulin_MRD_1985} and generalized by Kshevetskiy and Gabidulin~\cite{kshevetskiy_new_2005}. They are usually called the (generalized) Gabidulin codes. In 2016, Sheekey~\cite{sheekey_new_2016} found the so-called (generalized) twisted Gabidulin codes. They can be generalized into additive MRD codes \cite{otal_additive_2016}. Very recently, by using skew polynomial rings Sheekey~\cite{sheekey_new_arxiv} proved that they can be further generalized into a quite big family and all the MRD codes mentioned above can be obtained in this way.
		\item The non-additive family constructed by Otal and \"Ozbudak in \cite{otal_non-additive_2018}.
		\item The family appeared in \cite{trombetti_new_2017} which is related to the Hughes-Kleinfeld semifields.
	\end{itemize}
	\item The second type of constructions provides us MRD codes of minimum distance $d=n-1$.
	\begin{itemize}
		\item Non-linear MRD codes by Cossidente, the second author and Pavese \cite{cossidente_non-linear_2016} which were later generalized by Durante and Siciliano \cite{durante_nonlinear_MRD_2017}.
		\item Linear MRD codes associated with maximum scattered linear sets of $\PG(1,q^6)$ and $\PG(1,q^8)$ presented recently in \cite{BZZ,csajbok_newMRD_2017,csajbok_maximum_arxiv,MMZ,ZZ}.
	\end{itemize}
\end{enumerate}
For the relationship between MRD codes and other geometric objects such as linear sets and Segre varieties, we refer to \cite{lunardon_mrd-codes_2017}. For more results concerning maximum scattered linear sets and associated MRD codes, see \cite{bartoli_scattered_arxiv}, \cite{csajbok_classes_2017}, \cite{csajbok_maximum_2017}, \cite{csajbok_equivalence_2016}, \cite{csajbok_maximum_4_arxiv} and \cite{ShVdV}.

Compared to the known MRD codes in $\F_q^{n\times n}$ listed above, there are slightly more ways to get MRD codes in $\F_q^{m\times n}$ with $m<n$, see \cite{csajbok_maximum_2017},~\cite{donati_generalization_2017},~\cite{horlemann-trautmann_new_2015},~\cite{neri_genericity_2017} and \cite{schmidt_number_MRD_2017}.

For an MRD code $\cC$ in $\F_q^{n \times n}$, by \cite[Corollary 5.6]{lunardon_kernels_2017}, its left and right idealisers are isomorphic to finite fields of size at most  $q^n$. Moreover, according to \cite[Proposition 4.2]{lunardon_kernels_2017} if the left and right idealisers of an MRD code $\cC$ in $\F_q^{n \times n}$ are both isomorphic to $\F_{q^n}$, then the same holds for $\cC^\top$ and $\cC^\perp$.

Among the $\F_q$-linear MRD codes listed in (1) and (2), only the generalized Gabidulin codes have this special property. Thus, it is natural to ask whether there exist other MRD codes in  $\F_q^{n \times n}$ with maximum left and right idealisers. In this paper, we classify $\F_q$-linear MRD codes $\cC$ in $\F_q^{n\times n}$, $n\leq 9$, with $L(\cC)\cong R(\cC)\cong \F_{q^n}$ up to the adjoint and Delsarte dual operations. In particular, our classification includes new examples of such MRD codes for $n=7$, $q$ odd (cf.\ Theorem \ref{thm1} and Corollary \ref{cor1}), and for $n=8$, $q\equiv 1 \pmod 3$ (cf.\ Theorem \ref{thm2} and Corollary \ref{cor2}).

More precisely, we prove the following result.

\begin{theorem}\label{thm:main}
Let $\cC$ be an $\F_q$-linear MRD code in $\F_{q}^{n\times n}$ with left and right idealisers isomorphic to $\F_{q^n}$, $n\geq 2$.
\begin{itemize}
\item If $n\leq 6$ or $n=9$ then $\cC$ is equivalent to a generalized Gabidulin code.
\item If $n=7$ then  $\cC$ is equivalent to a generalized Gabidulin code or $q$ is odd and, up to the adjoint operation, $\cC$ is equivalent either to
\[\cC_7:=\{a_0X+a_1X^q+a_2X^{q^3} \colon a_0,a_1,a_2\in \F_{q^7}\}\] or to
\[\cC_7':=\{a_0X+a_1X^{q^3}+a_2X^{q^5}+a_3X^{q^6} \colon a_0,a_1,a_2,a_3\in \F_{q^7}\}.\]
\item If $n=8$ then $\cC$ is equivalent to a generalized Gabidulin code or $q\equiv 1 \pmod 3$ and, up to the adjoint operation, $\cC$ is equivalent either to
\[\cC_8:=\{a_0X+a_1X^q+a_2X^{q^3} \colon a_0,a_1,a_2\in \F_{q^8}\}\] or to
\[\cC_8':=\{a_0X+a_1X^{q^2}+a_2X^{q^3}+a_3X^{q^4}+a_4X^{q^5} \colon a_0,a_1,a_2,a_3,a_4\in \F_{q^8}\}.\]
\end{itemize}
(Note that $\cC_7'$ is equivalent to $\cC_7^\perp$ and $\cC_8'$ is equivalent to $\cC_8^\perp$.)
\end{theorem}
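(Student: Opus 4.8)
The plan is to realise $\cC$ inside the ring of $q$-polynomials, use the two maximal idealisers to force a monomial (Moore-type) shape, and then reduce the whole statement to a finite determinantal analysis of exponent sets. First I identify $\F_q^{n\times n}$ with $\End_{\F_q}(\F_{q^n})$ and write each codeword as a $q$-polynomial $f(X)=\sum_{i=0}^{n-1}a_iX^{q^i}$. On this algebra the two families of maps $f\mapsto \mu f$ and $f\mapsto f(\lambda X)$ (for $\mu,\lambda\in\F_{q^n}$) give commuting $\F_{q^n}$-actions whose joint weight spaces are precisely the lines $\F_{q^n}X^{q^i}$, $0\le i\le n-1$, and these weights are pairwise distinct. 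Since all copies of $\F_{q^n}$ inside $\End_{\F_q}(\F_{q^n})$ are conjugate, and since under an equivalence $\cC\mapsto A\cC^\rho B$ the left idealiser is transformed only through $A$ and the right idealiser only through $B$, I may normalise both idealisers to these standard scalar fields. Then $\cC$ is stable under both actions, hence a sum of weight spaces:
\[\cC=\cC_I:=\Big\{\,\sum_{i\in I}a_iX^{q^i}\colon a_i\in\F_{q^n}\,\Big\}\]
for a fixed $I\subseteq\Z/n\Z$ with $|I|=k:=n-d+1$. Up to a cyclic shift (pre-composing with a power of the Frobenius map $X\mapsto X^q$) I take $0\in I$; the adjoint acts as $I\mapsto -I$, and, using that the Delsarte dual again has maximal idealisers by \cite[Proposition 4.2]{lunardon_kernels_2017}, a trace-form computation shows it sends $\cC_I$ to $\cC_{I'}$ with $I'=(\Z/n\Z)\setminus(-I)$. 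So it suffices to classify the sets $I$ up to shift, negation and this complementation.

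Second I set up the MRD test. As $\cC_I$ has $\F_{q^n}$-dimension $k$, it is MRD of minimum distance $d=n-k+1$ if and only if no nonzero codeword has an $\F_q$-kernel of dimension $\ge k$. Fixing $\F_q$-linearly independent candidate kernel elements $x_1,\dots,x_k$, a nonzero coefficient vector $(a_i)_{i\in I}$ killing all of them exists exactly when the generalised Moore matrix $\big(x_j^{q^{i}}\big)_{1\le j\le k,\ i\in I}$ is singular. Hence $\cC_I$ \emph{fails} to be MRD precisely when this $k\times k$ determinant vanishes for some $\F_q$-independent $x_1,\dots,x_k$. When $I=\{0,s,\dots,(k-1)s\}$ is an arithmetic progression with $\gcd(s,n)=1$ the determinant is a classical Moore determinant in the Frobenius $X\mapsto X^{q^s}$, which is nonzero exactly on $\F_q$-independent tuples; these are the generalized Gabidulin codes, and they are always MRD.

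The heart of the argument, and its main obstacle, is the determinant analysis for the finitely many non-progression orbits of $I$ in each dimension $2\le n\le 9$. For every such orbit I would either exhibit an explicit $\F_q$-independent tuple making the Moore determinant vanish, proving $\cC_I$ is not MRD, or prove the determinant never vanishes on independent tuples. The genuinely delicate cases are $I=\{0,1,3\}$ for $n=7$ and $n=8$: here I expand the vanishing condition, use the scaling freedom to eliminate one variable, and reduce it to the solvability in $\F_{q^n}$ of a single low-degree equation. The degree of that equation is exactly what produces the arithmetic constraints --- a quadratic obstruction that has no solution precisely when $q$ is odd (for $n=7$) and a cubic obstruction tied to cube roots of unity that has no solution precisely when $q\equiv 1\pmod{3}$ (for $n=8$) --- so $\cC_7$ and $\cC_8$ are MRD in exactly those ranges. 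Running the same computation for $n=9$ shows that $\{0,1,3\}$ always admits an independent vanishing tuple, so it yields nothing new; assembling the surviving orbits, and recognising $\cC_7'$ and $\cC_8'$ as the complements $I'=(\Z/n\Z)\setminus(-I)$ (whence $\cC_7'\sim\cC_7^\perp$ and $\cC_8'\sim\cC_8^\perp$), gives exactly the three stated families. The hard part throughout is controlling these generalised Moore determinants, since, unlike in the Gabidulin case, they genuinely can vanish on independent tuples, and determining the precise $q$ for which they do forces the reduction to square- and cube-solvability above.
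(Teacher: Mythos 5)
Your overall architecture matches the paper's: normalising both idealisers to the standard scalar field, decomposing $\cC$ into the weight spaces $\F_{q^n}X^{q^i}$ (your two-commuting-actions argument is a clean repackaging of the paper's Theorem \ref{th:classification}), the Moore-determinant criterion for MRD-ness (Theorem \ref{connection}), and the reduction to finitely many exponent sets up to shift, negation and complementation. However, there is a concrete gap in your case analysis for $n=9$. After the reductions, the surviving non-Gabidulin candidates for $n=9$ are \emph{not} $\{0,1,3\}$ (that set is trivially non-MRD there, since $X^{q^3}-X$ has $q^3$ roots when $3\mid 9$); they are the $k=4$ sets $\cT=\{0,s,2s,4s\}$ with $s\in\{1,4,7\}$. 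Ruling these out is the only genuinely hard part of the $n=9$ classification, and the paper needs a nontrivial explicit construction for it: a codeword $-X+(1+c^{-q})X^q+cX^{q^2}-X^{q^4}$ with $c\in\F_{q^3}^*$ chosen so that $\Tr_{q^3/q}(1/c)=-2$ and $\N_{q^3/q}(1/c)=-1$ (such $c$ exists by a result of Moisio), followed by a rank computation on the associated $9\times 9$ Dickson matrix to show this codeword has $q^4$ roots. Your proposal never addresses these sets, so the first bullet of the theorem is unproved for $n=9$.

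A second, softer issue: for the decisive cases $\cT=\{0,1,3\}$ with $n=7,8$ you assert that the vanishing condition reduces to ``a single low-degree equation'' whose solvability yields the congruences on $q$, but you do not exhibit the reduction, and this is where all the work lies. The paper's actual arguments are of a different flavour: for $n=7$ the non-existence of a vanishing independent triple for $q$ odd comes from the fact that $\{0,1,3\}$ is a $(7,3,1)$-difference set, so a vanishing triple would force a Fano subplane of $\PG(2,q^7)$, impossible in odd characteristic; the converse for $q$ even uses the $2$-rank of the $\PG(2,2)$ incidence matrix. For $n=8$ one needs an explicit collineation computation in $\PG(2,q^8)$ leading to $a^2-a+1=0$ and a contradiction, and for $q\not\equiv 1\pmod 3$ explicit rank-$5$ Dickson matrices built from a root of $1+a+a^2=0$. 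Until you carry out your claimed reductions (in particular, producing the explicit low-rank codewords needed for the ``only if'' directions), the $n=7,8$ items are also not established.
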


The rest of this paper is organized as follows: In Section \ref{sec:pre}, we prove several results concerning the representation and the equivalence of MRD codes with maximum left and right idealisers. Moreover, we also show connections between Moore matrices and such MRD codes. Section \ref{sec:constructions} includes the constructions and the classification results of Theorem \ref{thm:main}. In Section \ref{sec:nonexistence} we show a link between the Dickson-Guralnick-Zieve curves and a family of rank-metric codes in $\F_q^{n\times n}$, which provides the MRD codes of Section \ref{sec:constructions} for $n=7$ and $8$. By using some recent results on these curves, we can prove that the members of this family of rank-metric codes are not MRD for $n\geq 9$.

\section{Linearized polynomials and Moore matrices}
\label{sec:pre}
As we are working with rank-distance codes in $\F_q^{n\times n}$ in this paper, it is more convenient to describe codes in the language of \emph{$q$-polynomials} (or \emph{linearized polynomials}) over $\F_{q^n}$, considered modulo $X^{q^n}-X$. These polynomials are represented by the set
\[\lp{n}{q}[X]=\left\{\sum_{i=0}^{n-1} c_i X^{q^i}\colon c_i\in \F_{q^n} \right\}.\]
After fixing an ordered $\F_q$-basis $\{b_1,b_2,\ldots,b_n\}$ for $\F_{q^n}$ it is possible to give a bijection $\Phi$ which associates for each matrix $M\in \F_q^{n\times n}$ a unique $q$-polynomial $f_M\in \lp{n}{q}$. More precisely, put ${\bf b}=(b_1,b_2,\ldots,b_n)\in \F_{q^n}^n$, then $\Phi(M)=f_M$ where for each $\bu=(u_1,u_2,\ldots,u_n)\in \F_q^n$ we have $f_M({\bf b}\cdot \bu)={\bf b}\cdot \bu  M$. The trace map from $\F_{q^n}$ to $\F_q$ is defined by the $q$-polynomial \[\Tr_{q^n/q}(x)=x+x^q+\ldots+x^{q^{n-1}} \text{ for }x\in \F_{q^n}.\]

As we mentioned in the introduction, the most well-known family of MRD codes is called (generalized) Gabidulin codes. They can be described by the following subset of linearized polynomials:
\begin{equation}
\label{GGC}
\cG_{k,s} = \{a_0 x + a_1 x^{q^{s}} + \dots +a_{k-1} x^{q^{s(k-1)}}\colon a_0,a_1,\dots, a_{k-1}\in \F_{q^n} \},
\end{equation}
where $s$ is relatively prime to $n$. It is obvious that there are $q^{kn}$ polynomials in $\cG_{k,s}$. Each of them has at most $q^{k-1}$ roots (cf.\ \cite{Gow}) which means that this is an MRD code.

Given two rank-distance codes $\cC_1$ and $\cC_2$ which consist of linearized polynomials, they are equivalent if and only if there exist $\varphi_1$, $\varphi_2\in \lp{n}{q}[X]$ permuting $\F_{q^n}$, $\psi\in \lp{n}{q}[X]$ and $\rho\in \Aut(\F_q)$ such that
\[ \varphi_1\circ f^\rho \circ \varphi_2 + \psi\in \cC_2 \text{ for all }f\in \cC_1,\]
where $\circ$ stands for the composition of maps and $f^\rho(X)= \sum a_i^\rho X^{q^i}$ for $f(X)=\sum a_i X^{q^i}$.

For a rank-distance code $\cC$ given by a set of linearized polynomials, its left and right idealisers can be written as:
\[L(\cC)= \{ \varphi \in \lp{n}{q}\colon f\circ \varphi\in \cC \text{ for all }f\in \cC \},\]
\[R(\cC)= \{ \varphi \in \lp{n}{q}\colon \varphi \circ f\in \cC \text{ for all }f\in \cC \}.\]
Note that the left idealiser is written as $f\circ \varphi$ rather than $\varphi\circ f$ because of the definition of $\Phi$ and similarly for the right idealiser.

The idealisers of generalized twisted Gabidulin codes together with a complete answer to the equivalence between members in this family can be found in \cite{lunardon_generalized_2015}.

The \emph{adjoint} of a $q$-polynomial $f(x)=\sum_{i=0}^{n-1}a_i x^{q^i}$, with respect to the bilinear form $\langle x,y\rangle:=\Tr_{q^n/q}(xy)$, is given by
\[\hat{f}(x):=\sum_{i=0}^{n-1}a_{i}^{q^{n-i}} x^{q^{n-i}}.\]
If $\cC$ is a rank-metric code given by $q$-polynomials, then the \emph{adjoint code} $\cC^\top$ of $\cC$ is $\{\hat{f}\colon f\in\cC\}$.

In terms of linearized polynomials, the Delsarte dual can be interpreted in the following way \cite{sheekey_new_2016}:
\[\cC^\perp=\{f\colon b(f,g)=0 \text{ for all } g\in \cC \},\]
where $b\left( f,g \right)=\Tr_{q^n/q}\left(\sum_{i=0}^{n-1}a_ib_i\right)$ for $f(x)=\sum_{i=0}^{n-1}a_ix^{q^i}$ and $g(x)=\sum_{i=0}^{n-1}b_ix^{q^i}\in \F_{q^n}[x]$.

It is well-known and also not difficult to show directly that two linear rank-distance codes are equivalent if and only if their Delsarte duals or their adjoint codes are equivalent. This observation yields the following result which we will use without further mentioning throughout the paper.

\begin{proposition}
	Let $\cC$ and $\cC'$ be rank metric codes of $\F_{q}^{n\times n}$ such that $\cC$ is obtained from $\cC'$ via a finite combination (possibly with repetition) of the $\top$ and $\perp$ operations and the equivalence maps. Then $\cC$ is equivalent to a generalized Gabidulin code if and only if $\cC'$ is equivalent to a generalized Gabidulin code.
\end{proposition}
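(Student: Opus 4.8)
The plan is to reduce the statement to the single--step claim that each of the three admissible operations---applying an equivalence map, passing to the adjoint $(\,.\,)^\top$, and passing to the Delsarte dual $(\,.\,)^\perp$---carries the class of codes equivalent to a generalized Gabidulin code into itself. Since $\cC$ is reached from $\cC'$ by finitely many such steps, an induction on the number of steps then yields the claimed equivalence of the two properties. For both directions of the ``if and only if'' one uses that each operation is invertible within the same family: $(\,.\,)^\top$ and $(\,.\,)^\perp$ are involutions and equivalence is symmetric, so it suffices to prove that each operation maps the Gabidulin class \emph{into} the Gabidulin class, and the reverse inclusions follow by applying the inverse operation.

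The case of an equivalence map is immediate from transitivity of the equivalence relation. For the other two operations the key input is the observation recorded just before the statement: two linear codes are equivalent if and only if their adjoints are equivalent, and if and only if their Delsarte duals are equivalent. Hence if $\cC$ is equivalent to some $\cG_{k,s}$, then $\cC^\top$ is equivalent to $\cG_{k,s}^\top$ and $\cC^\perp$ to $\cG_{k,s}^\perp$, and it remains only to check that $\cG_{k,s}^\top$ and $\cG_{k,s}^\perp$ are themselves (equivalent to) generalized Gabidulin codes.

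For the adjoint I would compute directly from $\hat{f}(x)=\sum_i a_i^{q^{n-i}}x^{q^{n-i}}$: the monomial $a_j x^{q^{sj}}$ of a member of $\cG_{k,s}$ contributes $a_j^{q^{-sj}}x^{q^{-sj}}$, and since the coefficients $a_j^{q^{-sj}}$ range freely over $\F_{q^n}$ this gives $\cG_{k,s}^\top=\cG_{k,n-s}$, with $\gcd(n-s,n)=\gcd(s,n)=1$, again a generalized Gabidulin code. For the Delsarte dual, write $S=\{0,s,2s,\dots,(k-1)s\}\subseteq\Z_n$ for the exponent set of $\cG_{k,s}$; nondegeneracy of the trace form shows that $f=\sum_i a_i x^{q^i}$ lies in $\cG_{k,s}^\perp$ exactly when $a_i=0$ for all $i\in S$, so $\cG_{k,s}^\perp$ has exponent set $\Z_n\setminus S=\{ks,(k+1)s,\dots,(n-1)s\}$, using that multiplication by $s$ permutes $\Z_n$. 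Composing on the left with the invertible $q$-polynomial $\varphi(x)=x^{q^{-ks}}$ shifts this set to $\{0,s,\dots,(n-1-k)s\}$, which is precisely $\cG_{n-k,s}$; hence $\cG_{k,s}^\perp$ is equivalent to a generalized Gabidulin code.

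There is no serious obstacle: the proposition is essentially a bookkeeping statement, and the only genuine content is the two short exponent computations above. The step most likely to need care is the Delsarte--dual computation, where one must track exponents modulo $n$ and verify that the complementary set $\Z_n\setminus S$ can be normalized back to the standard form by a Frobenius composition; the adjoint and equivalence cases are routine.
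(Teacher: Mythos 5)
Your proof is correct and follows essentially the same route as the paper, whose entire argument is the observation that $\cG_{k,s}^\top$ is again a generalized Gabidulin code and $\cG_{k,s}^\perp$ is equivalent to $\cG_{n-k,s}$; you simply supply the two exponent computations and the induction bookkeeping that the paper leaves implicit. Both computations check out (in particular $\cG_{k,s}^\top=\cG_{k,n-s}$ with $\gcd(n-s,n)=1$, and the complement $\Z_n\setminus\{0,s,\dots,(k-1)s\}=\{ks,\dots,(n-1)s\}$ normalizes to $\cG_{n-k,s}$ by a Frobenius shift), so nothing further is needed.
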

\begin{proof}
It follows from the fact that $\cG_{k,s}^\top$ is equivalent to $\cG_{k,s}$ and $\cG_{k,s}^\perp$ is equivalent to $\cG_{n-k,s}$.
\end{proof}

Usually, codes equivalent to those defined in \eqref{GGC} are also called generalized Gabidulin codes. Note that changing the basis $\{b_1,b_2,\ldots,b_n\}$ of $\F_{q^n}$ can alter the shape of the corresponding $q$-polynomials but provide equivalent codes. In this paper by a generalized Gabidulin code we always refer to a code defined exactly as in \eqref{GGC}. We decided along this notation since, as we will see, finding a nice shape of the representing $q$-polynomials has a crucial role in our investigation.

\subsection{Rank-distance codes with maximum nuclei}
First let us show that a rank-distance code in $\lp{n}{q}$ with maximum right and left idealisers has to be equivalent to a set of linearized polynomials in a special form.
\begin{theorem}
	\label{th:classification}
	Let $\cC$ be an $\F_q$-subspace of $\lp{n}{q}$. Assume that one of the left and right idealisers of $\cC$ is isomorphic to $\F_{q^n}$. Then there exists an integer $k$ such that $|\cC|=q^{kn}$ and $\cC$ is equivalent to
	\begin{equation}\label{eq:normalized_C_1}
		\cC = \left\{ \sum_{i=0}^{k-1}a_{i} X^{q^{t_i}}+ \sum^{n-1}_{j\notin\{t_0,t_1,\cdots, t_{k-1}\}}  g_j(a_0,\cdots,a_{k-1})X^{q^j}\colon a_0,\cdots,a_{k-1}\in \F_{q^n} \right\},
	\end{equation}
	where $0\le t_0<t_1<\cdots<t_{k-1}\le n-1$ and the $g_j$'s are $\F_q$-linear functions from $\F_{q^n}^k$ to $\F_{q^n}$. If the other idealiser of $\cC$ is also isomorphic to $\F_{q^n}$, then $\cC$ is equivalent to
	\begin{equation}\label{eq:normalized_C_2}
		\left\{ \sum_{i=0}^{k-1}a_{i} X^{q^{t_i}}\colon a_i\in \F_{q^n} \right\}.
	\end{equation}
\end{theorem}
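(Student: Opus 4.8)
The plan is to work inside $\lp{n}{q}$ equipped with composition, which is isomorphic to the skew-polynomial ring $\F_{q^n}[\sigma]/(\sigma^n-1)$ with $\sigma\colon x\mapsto x^q$ and commutation rule $\sigma\alpha=\alpha^q\sigma$; under this identification the monomial map $x\mapsto\alpha x$ is the constant $\alpha$, and the distinguished copy $\F_{q^n}=\{\alpha X\colon\alpha\in\F_{q^n}\}$ is the subfield of constants. The first thing I would record is how the idealisers behave under an equivalence $\cC'=\varphi_1\circ\cC\circ\varphi_2$ with $\varphi_1,\varphi_2$ invertible: a direct check gives $R(\cC')=\varphi_1\circ R(\cC)\circ\varphi_1^{-1}$ and $L(\cC')=\varphi_2^{-1}\circ L(\cC)\circ\varphi_2$, so the two idealisers are conjugated \emph{independently} by $\varphi_1$ and $\varphi_2$. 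Since any two subfields of $\lp{n}{q}$ isomorphic to $\F_{q^n}$ are conjugate in $\GL$ (all embeddings of $\F_{q^n}$ into the endomorphism ring $\F_q^{n\times n}$ are conjugate, by Skolem--Noether), I can normalise whichever idealiser is $\cong\F_{q^n}$ to the field of constants at the cost of one equivalence.

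For the first assertion assume, after this normalisation, that $R(\cC)$ is the constants. Then $\alpha\circ f\in\cC$ for all $f\in\cC$ and all $\alpha$, i.e. $\cC$ is a left $\F_{q^n}$-subspace of $\lp{n}{q}$; being a nonzero $\F_{q^n}$-space it has $q^{kn}$ elements with $k=\dim_{\F_{q^n}}\cC$. To read off \eqref{eq:normalized_C_1} I would pass to the coefficient vector $(c_0,\dots,c_{n-1})$ of $f=\sum c_iX^{q^i}$ and put the corresponding $k$-dimensional $\F_{q^n}$-subspace in reduced row-echelon form: the pivot columns give the exponents $t_0<\dots<t_{k-1}$, the pivot coordinates become the free parameters $a_0,\dots,a_{k-1}$, and the non-pivot coordinates become $\F_{q^n}$-linear (hence $\F_q$-linear) functions $g_j$ of them. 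If instead the \emph{left} idealiser is the one normalised, the only difference is that the induced right action of the constants is Frobenius-twisted, $f\mapsto\sum c_i\alpha^{q^i}X^{q^i}$; the substitution $c_i=d_i^{q^i}$ untwists this to the standard diagonal action, echelon reduction applies to the $d_i$, and undoing the substitution again yields $\F_q$-linear functions $g_j$, so \eqref{eq:normalized_C_1} holds in either case.

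For the second assertion I would use the independence observed above to normalise both idealisers to the constants simultaneously: first pick $\varphi_1$ making $R(\cC)$ standard, then pick $\varphi_2$ conjugating the still field-isomorphic left idealiser to standard, which leaves $R$ untouched since $R(\cC')$ does not depend on $\varphi_2$. Now $\cC$ is closed under both left and right multiplication by constants, i.e. it is an $(\F_{q^n},\F_{q^n})$-subbimodule of $\lp{n}{q}$. The decomposition $\lp{n}{q}=\bigoplus_{i=0}^{n-1}\F_{q^n}\sigma^i$ exhibits it as a direct sum of the bimodules $\F_{q^n}\sigma^i$, on which constants act on the left in the standard way and on the right through $\beta\mapsto\beta^{q^i}$; as these twists are pairwise distinct, the summands are pairwise non-isomorphic simple bimodules. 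Hence every subbimodule is a sum of a subset of them, giving $\cC=\bigoplus_{i\in\{t_0,\dots,t_{k-1}\}}\F_{q^n}\sigma^i$, which is exactly \eqref{eq:normalized_C_2}.

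The echelon reduction and the commutation bookkeeping are routine. The two points needing genuine care are, first, checking that the idealisers are conjugated independently, so that both can be sent to the constants at once without the second normalisation spoiling the first; and second, the bimodule step — recognising the $\F_{q^n}\sigma^i$ as pairwise non-isomorphic simple $(\F_{q^n},\F_{q^n})$-bimodules, so that the only subbimodules are coordinate sums. This last fact is the real content: it forces all the functions $g_j$ of \eqref{eq:normalized_C_1} to vanish as soon as the \emph{second} idealiser is also maximal, and is what separates \eqref{eq:normalized_C_2} from \eqref{eq:normalized_C_1}.
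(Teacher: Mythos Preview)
Your argument is correct and reaches the same conclusion as the paper, but the route for the second assertion is genuinely different. Both proofs begin the same way: normalise the maximal idealiser(s) to the field of constants $\{aX:a\in\F_{q^n}\}$ via conjugation in $\GL$. Your observation that $R(\cC')=\varphi_1 R(\cC)\varphi_1^{-1}$ and $L(\cC')=\varphi_2^{-1} L(\cC)\varphi_2$ are conjugated independently is a point the paper passes over in a single sentence, so you are in fact more careful there. For \eqref{eq:normalized_C_1} the two arguments are equivalent --- the paper builds the pivot set $\{t_0,\dots,t_{k-1}\}$ iteratively while you invoke row-echelon form, which is the same computation packaged differently; your handling of the left-idealiser case via the Frobenius-untwisting substitution $c_i=d_i^{q^i}$ is a nice touch that makes the $\F_q$-linearity of the $g_j$ transparent.

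The substantive difference is in deriving \eqref{eq:normalized_C_2}. The paper argues by a direct coefficient computation: taking $f\in\cC$ with free parameters $(c_0,0,\dots,0)$ and hitting it with $\varphi_1(X)=b^{-q^{t_0}}X$ on the left and $\varphi_2(X)=bX$ on the right yields another element of $\cC$ with the same free parameters, forcing $g_j(c_0,0,\dots,0)b^{q^j-q^{t_0}}=g_j(c_0,0,\dots,0)$ for all $b$, hence $g_j=0$. Your route is structural: recognise $\cC$ as an $(\F_{q^n},\F_{q^n})$-subbimodule of $\bigoplus_i\F_{q^n}\sigma^i$, observe that the summands are simple and pairwise non-isomorphic (distinct right Frobenius twists), and conclude by semisimplicity that $\cC$ is a coordinate sum. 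The two arguments are really the same fact seen from two sides --- the inequality $q^j\ne q^{t_0}$ that kills $g_j$ in the paper is exactly the non-isomorphism of the bimodules $\F_{q^n}\sigma^j$ and $\F_{q^n}\sigma^{t_0}$ in yours --- but your formulation explains \emph{why} the computation works and would generalise more readily, at the cost of assuming the reader knows that submodules of a sum of pairwise non-isomorphic simples are coordinate sums. The paper's version is entirely self-contained.
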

\begin{proof}
	Let $\cN$ denote the idealiser of $\cC$ which is isomorphic to $\F_{q^n}$. All Singer cycles in $\GL(n,q)$ are conjugate, i.e.\ there exists an invertible $f\in \lp{n}{q}$ such that
	$\cN':=f \circ \cN \circ f^{-1}=\{aX\colon a\in \F_{q^n} \}$.
	It follows that when $\cN=R(\cC)$ then $R(\cC')=\cN'$ where $\cC'=f \circ \cC$, whereas when $\cN=L(\cC)$ then $L(\cC')=\cN'$ where $\cC'=\cC \circ f^{-1}$. It means that up to equivalence we may assume that
	\begin{equation}
	\label{eq:N=all}
		\cN=\{aX \colon a\in \F_{q^n} \}.
	\end{equation}
	If the other idealiser $\cM$ of $\cC$ is also isomorphic to $\F_{q^n}$, then by using another equivalence map we may also assume that $\cM=\cN$.
	
	First we prove \eqref{eq:normalized_C_1}. Let $t_0$ be an integer such that there exists $f_0(X)=\sum_{i=0}^{n-1}a_i X^{q^i}\in \cC$ with $a_{t_0}\neq 0$. If $\cN$ is the right idealiser of $\cC$, then, by \eqref{eq:N=all}, $\{af_0(X) \colon a \in \F_{q^n} \}\subseteq \cC$, which means that for any $a\in \F_{q^n}$ there is at least one polynomial in $\cC$ where the coefficient of $X^{q^{t_0}}$ equals $a$. If $\cN$ is the left idealiser of $\cC$, then, by \eqref{eq:N=all},  $\{f_0(aX) \colon a \in \F_{q^n} \}\subseteq \cC$. Again, it follows that for any $a\in \F_{q^n}$ there is at least one polynomial in $\cC$ in which the coefficient of $X^{q^{t_0}}$ equals $a$.
	
	If $|\cC|=q^n$, we have proved \eqref{eq:normalized_C_1}; otherwise there exist non-zero polynomials in $\cC$ where the coefficient of $X^{q^{t_0}}$ is $0$. Let us denote the set of all such polynomials by $\bar\cC$. It is easy to check that $\bar \cC$ is still an $\F_q$-subspace. Let $t_1\neq t_0$ be an integer such that there exists a polynomial $f_1(X)=\sum_{i=0}^{n-1}a_i X^{q^i}\in \bar \cC$ with $a_{t_1}\neq 0$. Again, if $\cN=R(\cC)$, by \eqref{eq:N=all}, we see that $\{af_1(X) \colon a \in \F_{q^n} \}\subseteq \bar \cC$, whence $\{a_0f_0(X) + a_1f_1(X) \colon a_0,a_1\in \F_{q^n} \}\subseteq \cC$. If $\cN = L(\cC)$ then $\{f_1(aX) \colon a \in \F_{q^n} \}\subseteq \bar \cC$ which means
	$\{f_0(a_0X) + f_1(a_1X) \colon a_0,a_1\in \F_{q^n} \}\subseteq \cC$. If $|\cC|=q^{2n}$, we have proved \eqref{eq:normalized_C_1}; otherwise we continue this process by choosing a suitable $t_2\notin \{t_0,t_1\}$ and so on.  After finite steps, we obtain $|\cC|=q^{kn}$ and \eqref{eq:normalized_C_1}.
	
	Now we prove \eqref{eq:normalized_C_2}, so suppose that the other idealiser $\cM$ is also isomorphic to $\F_{q^n}$. As we already mentioned, we may assume
	\begin{equation}\label{eq:N'=all}
	\cM=\{aX \colon a\in \F_{q^n} \}.
	\end{equation}
	
	By \eqref{eq:normalized_C_1},
	\[f(X)=c_{0}X^{q^{t_0}} + \sum^{n-1}_{j\notin \{t_0,\ldots,t_{k-1}\}} g_j(c_0,0,\ldots,0) X^{q^j}\in \cC,\]
	for each $c_0\in\F_{q^n}$. For any $b\in \F_{q^n}^*$,  it is clear that $\varphi_2(X):= bX\in L(\cC)$ and $\varphi_1(X):=b^{-q^{t_0}}X\in R(\cC)$. Then
	\[ \varphi_1\circ f\circ \varphi_2(X) = c_0X^{q^{t_0}} + \sum^{n-1}_{j\notin \{t_0,\ldots,t_{k-1}\}} g_j(c_0,0,\ldots,0)b^{q^{j}-q^{t_0}} X^{q^j}\in \cC.\]
	Since $f$ is the unique element in $\cC$ associated with $(a_0,\ldots,a_{k-1})=(c_0, 0,\ldots, 0)$ we have
	\[g_j(c_0,0,\ldots,0)b^{q^{j}-q^{t_0}}= g_j(c_0,0,\ldots,0)\]
	for every $b\in \F_{q^n}$, which implies that $g_j(c_0,0,\ldots,0)=0$ for every $j\notin \{t_0,\ldots,t_{k-1}\}$ and for each $c_0\in \F_{q^n}$. Similarly, we can prove that  $g_j(0,\ldots,c_i,\ldots,0)=0$ for every $i\in \{0,\ldots,k-1\}$, $j\notin \{t_0,\ldots, t_{k-1}\}$ and $c_i\in \F_{q^n}$. Since $g_j(a_0,\ldots, a_{k-1})=g_j(a_0,0,\ldots) + g_j(0, a_1, 0,\ldots) + \dots + g_j(0,\ldots, a_{k-1})$, $g_j$ is the zero map for each $j$. Therefore we obtain \eqref{eq:normalized_C_2}.
\end{proof}

The next result shows how to handle the equivalence problem of MRD codes given as in \eqref{eq:normalized_C_2}.

\begin{theorem}\label{th:equivalence}
	Let $\Lambda_1$ and $\Lambda_2$ be two $k$-subsets of $\{0,\dots, n-1\}$. Define
	\[\cC_j=\left\{ \sum_{i\in \Lambda_j}a_{i} X^{q^{i}}\colon a_i\in \F_{q^n} \right\}\]
	for $j=1,2$. Then $\cC_1$ and $\cC_2$ are equivalent if and only if
	\begin{equation}\label{eq:lambda12}
		\Lambda_2=\Lambda_1+s:=\{i+s \pmod{n} \colon i\in \Lambda_1\}
	\end{equation}
	for some $s\in \{0,\cdots,n-1 \}$.
\end{theorem}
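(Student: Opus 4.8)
The plan is to prove both implications directly, treating the harder converse through the idealisers, which Theorem~\ref{th:classification} has already forced to be as large as possible. Throughout I may assume $1\le k\le n-1$, the cases $k\in\{0,n\}$ being trivial.

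For the easy direction, assume $\Lambda_2=\Lambda_1+s$ and exhibit the explicit equivalence $X\mapsto X^{q^s}$: taking $\varphi_1(X)=X$, $\varphi_2(X)=X^{q^s}$, $\rho=\mathrm{id}$ and $\psi=0$ in the equivalence criterion, for $f(X)=\sum_{i\in\Lambda_1}a_iX^{q^i}\in\cC_1$ one gets $f\circ\varphi_2(X)=\sum_{i\in\Lambda_1}a_iX^{q^{i+s}}$, whose support (reduced modulo $X^{q^n}-X$) is exactly $\Lambda_1+s=\Lambda_2$ while the coefficients still range freely over $\F_{q^n}$. Hence $\cC_1\circ\varphi_2=\cC_2$, and $X^{q^s}$ permutes $\F_{q^n}$, so this is a genuine equivalence.

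For the converse I would first normalise the equivalence. Both codes are additive, so by \eqref{eq:def_equiv} we may take the translation $C=0$, and since each $\cC_j$ is invariant under applying any field automorphism to its free coefficients, $\rho$ can be absorbed; thus $\cC_2=\varphi_1\circ\cC_1\circ\varphi_2$ for invertible $\varphi_1,\varphi_2\in\lp{n}{q}$. Next I would compute the idealisers explicitly: a short argument feeding in the monomials $f=X^{q^i}$ shows $L(\cC_j)=R(\cC_j)=\cA_j:=\{\sum_{i\in P_j}c_iX^{q^i}:c_i\in\F_{q^n}\}$, where $P_j=\{s:\Lambda_j+s=\Lambda_j\}$ is the cyclic stabiliser of $\Lambda_j$ in $\Z/n\Z$; in particular $\Lambda_j$ is a union of cosets of $P_j$. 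Because idealisers transform by conjugation under equivalence, $\cA_2=\varphi_1\circ\cA_1\circ\varphi_1^{-1}=\varphi_2^{-1}\circ\cA_1\circ\varphi_2$, forcing $|P_1|=|P_2|$, hence $P_1=P_2=:P$ (a cyclic group has a unique subgroup of each order) and $\cA_1=\cA_2=:\cA$. Writing $d=|P|$ and $m=n/d$, the algebra $\cA$ is exactly $\End_{\F_{q^m}}(\F_{q^n})\cong M_d(\F_{q^m})$, and both $\varphi_1,\varphi_2$ normalise it.

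The heart of the proof is a grading argument. The cosets of $P$ split $\lp{n}{q}=\bigoplus_{r=0}^{m-1}\cB_r$ with $\cB_r=\cA\circ X^{q^r}=X^{q^r}\circ\cA=\{\sum_{i\in P+r}c_iX^{q^i}\}$; this is a $\Z/m\Z$-grading ($\cB_a\circ\cB_b\subseteq\cB_{a+b}$) with $\cB_0=\cA$, and $\cC_j=\bigoplus_{r\in R_j}\cB_r$ for subsets $R_j\subseteq\Z/m\Z$, so $R_2=R_1+s \pmod m$ is exactly what must be shown. I would prove that the normalising maps are \emph{homogeneous} units, $\varphi_i\in\cB_{r_i}$: conjugation by $\varphi_i$ is an $\F_q$-algebra automorphism of $\cA\cong M_d(\F_{q^m})$, so by Skolem--Noether it agrees on $\cA$ with conjugation by some $a\circ X^{q^{r_i}}$ (the Frobenius part being realised by the $X^{q^r}$); then $\varphi_i$ differs from $a\circ X^{q^{r_i}}$ by an element of the centraliser $C_{\lp{n}{q}}(\cA)=\{cX:c\in\F_{q^m}\}$, whence $\varphi_i\in\cB_{r_i}$. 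Since $\varphi_1,\varphi_2$ are homogeneous units, $\varphi_1\circ\cB_r\circ\varphi_2=\cB_{r+r_1+r_2}$ (an injection between equal-sized homogeneous components), so $\cC_2=\bigoplus_{r\in R_1}\cB_{r+r_1+r_2}$ and $R_2=R_1+(r_1+r_2)$; translating cosets back to $\Z/n\Z$ gives $\Lambda_2=\Lambda_1+(r_1+r_2)$.

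The main obstacle is precisely the case $P\neq\{0\}$, where $\Lambda_j$ has a nontrivial cyclic symmetry and the idealisers strictly contain the Singer cycle $\{bX:b\in\F_{q^n}\}$. When $P=\{0\}$ everything collapses to the classical fact that the normaliser of a Singer cycle in $\GL(n,q)$ is $\GamL(1,q^n)$, so $\varphi_1,\varphi_2$ are monomials $cX^{q^{s_i}}$ and the shift is read off at once. The work in general lies in replacing this classical normaliser statement by the Skolem--Noether and centraliser computation that forces the equivalence maps to respect the coset grading; once that homogeneity is in hand, the conclusion is a single multiplication in the graded algebra.
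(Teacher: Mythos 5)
Your proof is correct, but it takes a genuinely different route from the paper's. The paper handles the converse in a few lines: it sets $\cD_j=\{aX^{q^j}\colon a\in\F_{q^n}\}$, defines $I_j$ as the support of $\varphi_1\circ \cD_j^\rho\circ\varphi_2$, and invokes an external result (Lemma 4.5 of Lunardon--Trombetti--Zhou on generalized twisted Gabidulin codes) asserting $I_l=I_j+l-j$; a one-line counting argument using $|\Lambda_1|=|\Lambda_2|=k$ then gives $\Lambda_2=\Lambda_1+s$. You instead prove the needed structural fact from scratch: you compute $L(\cC_j)=R(\cC_j)$ explicitly as the algebra of $q^m$-polynomials attached to the cyclic stabiliser $P_j$ of $\Lambda_j$, identify it with $\End_{\F_{q^m}}(\F_{q^n})\cong M_d(\F_{q^m})$, grade $\lp{n}{q}$ by the cosets of $P$, and use Skolem--Noether together with the double-centraliser theorem to force $\varphi_1,\varphi_2$ to be homogeneous, after which the shift is a single multiplication in the graded algebra. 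I checked the key steps --- the idealiser computation, the conjugation formulas $R(\cC_2)=\varphi_1\circ R(\cC_1)\circ\varphi_1^{-1}$ and $L(\cC_2)=\varphi_2^{-1}\circ L(\cC_1)\circ\varphi_2$, the fact that $\Lambda_j$ is a union of $P$-cosets, the homogeneity of the inverse of a homogeneous unit, and the final step --- and they are sound. What your approach buys is self-containedness (it replaces the cited lemma by standard noncommutative algebra) and a uniform treatment of the case $P\neq\{0\}$, where the idealisers strictly contain $\{bX\colon b\in\F_{q^n}\}$ and the naive ``normaliser of a Singer cycle'' argument does not apply; the cost is that it is considerably longer. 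One cosmetic remark: your opening appeal to Theorem \ref{th:classification} is unnecessary (and slightly off, since that theorem assumes an idealiser isomorphic to $\F_{q^n}$, which need not hold here); your direct computation of the idealisers supersedes it.
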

\begin{proof}
	The if part is trivial since $\Lambda_2=\Lambda_1+s$ implies $\cC_2=X^{q^s} \circ \cC_1$.
	Assume that $\cC_1$ and $\cC_2$ are equivalent. Let $\tau=(\varphi_1,\varphi_2,\rho)$ denote an  equivalence map from $\cC_1$ to $\cC_2$, i.e.\
	\[\{\varphi_1\circ f^\rho \circ \varphi_2\colon f\in \cC_1\}=\cC_2.\]
	For every $j\in \{0,\cdots, n-1\}$, let $\cD_j=\{aX^{q^j} \colon a\in \F_{q^n}\}$. Define
	\[I_j= \{i \colon \text{the coefficient of $X^{q^i}$ in } \varphi_1 \circ g^\rho \circ \varphi_2(X) \text{ is non-zero for some }g\in \cD_j\}.\]
	Since $\varphi_1 \circ g^\rho \circ \varphi_2$ is the zero polynomial only when $g$ is the zero polynomial, it follows that $I_j\neq \emptyset$ for each $j$.
	By \cite[Lemma 4.5]{lunardon_generalized_2015}, for any $j,l\in\{0,\cdots, n-1\}$,
	\[I_{l} = I_{j}+l-j:=\{i+l-j \pmod{n}\colon i\in I_j\}.\]
	If $l\in \Lambda_1$, then $\cD_l \subseteq \cC_1$ and hence $I_l\subseteq \Lambda_2$.
	Take any $s\in I_0$ and $l\in \Lambda_1$, then $s+l \in I_0+l=I_l \subseteq \Lambda_2$ and hence by $|\Lambda_1|=|\Lambda_2|=k$ we obtain $\Lambda_2=\Lambda_1+s$.	
\end{proof}

\subsection{Links with Moore Matrices}
It is clear that generalized Gabidulin codes and codes equivalent to them have maximum idealisers. It is not difficult to verify that they are actually the only known examples with this property. Hence, it is natural to ask whether there are MRD codes, inequivalent to the generalized Gabidulin codes, which have maximum idealisers. If they exist, can we classify them?

This question also has an interesting link with Moore matrices and Moore determinants which were introduced by Moore \cite{moore_two-fold_1896} in 1896.

Let $q$ be a prime power and take two positive integers, $n$ and $s$, with $\gcd(n,s)=1$. Put $\sigma:=q^s$. For
$A:=\{\alpha_0,\alpha_1,\ldots,\alpha_{k-1}\} \subseteq \F_{q^{n}}$, $k\leq n$,  a \emph{square Moore matrix} is defined as
\begin{equation}
\label{Moore}
M_{A,\,\sigma}:=\left(
\begin{matrix}
\alpha_0 & \alpha_0^{\sigma} & \cdots & \alpha_0^{\sigma^{k-1}} \\
\alpha_1 & \alpha_1^{\sigma} & \cdots & \alpha_1^{\sigma^{k-1}} \\
\vdots&  \vdots & \ddots &  \vdots \\
\alpha_{k-1} & \alpha_{k-1}^{\sigma} & \cdots & \alpha_{k-1}^{\sigma^{k-1}}
\end{matrix}
\right),
\end{equation}
which is a $\sigma$-analogue for the Vandermonde matrix. When it is clear from the context, then $\sigma$ will be omitted and we will simply write $M_A$. When $s=1$, then the determinant of $M$ can be expressed as
\begin{equation}
\label{detMoore}
\det(M_A)=\prod_{\mathbf{c}} (c_0\alpha_0 + c_1\alpha_1 +\cdots c_{k-1}\alpha_{k-1}),
\end{equation}
where $\mathbf{c}=(c_0,c_1,\cdots,c_{k-1})$ runs over all direction vectors in $\F_q^k$, or equivalently we can say that $\mathbf{c}$ runs over $\PG(k-1,q)$. We call $\det(M_A)$ the \emph{Moore determinant}.
It is not difficult to see that the following generalization also holds.
(In Remark \ref{prof} we will show how this result follows also from our Theorem \ref{connection}.)

\begin{theorem}\label{Mooremtx}
	Assume that $s$ satisfies $\gcd(s,n)=1$. For any $A=\{\alpha_0,\alpha_1,\ldots,\alpha_{k-1}\} \subseteq \F_{q^n}$, $k\leq n$, the elements of $A$ are $\F_q$-linearly dependent if and only if $\det(M_{A})=0$.
\end{theorem}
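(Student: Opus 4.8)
The plan is to reduce the statement about the $\sigma$-Moore matrix $M_{A,\sigma}$ (with $\sigma = q^s$, $\gcd(s,n)=1$) to the classical case $s=1$ already recorded in \eqref{detMoore}. The classical Moore determinant formula makes the claim transparent when $s=1$: since $\det(M_A)=\prod_{\mathbf c}(c_0\alpha_0+\cdots+c_{k-1}\alpha_{k-1})$ with $\mathbf c$ ranging over $\PG(k-1,q)$, the determinant vanishes exactly when some nontrivial $\F_q$-combination of the $\alpha_i$ is zero, which is precisely $\F_q$-linear dependence. So the whole content of the theorem for general $s$ is to transport this equivalence across the Frobenius substitution $x\mapsto x^{q^s}$.

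First I would observe that $\F_q$-linear dependence of $A=\{\alpha_0,\dots,\alpha_{k-1}\}$ is itself insensitive to applying a field automorphism: if $\tau$ is any automorphism of $\F_{q^n}$ fixing $\F_q$, then $\sum c_i\alpha_i=0$ with $c_i\in\F_q$ holds if and only if $\sum c_i\alpha_i^{\tau}=\bigl(\sum c_i\alpha_i\bigr)^{\tau}=0$, because the $c_i$ are fixed by $\tau$. Hence the set $A$ is $\F_q$-dependent if and only if $A^{\tau}:=\{\alpha_0^{\tau},\dots,\alpha_{k-1}^{\tau}\}$ is $\F_q$-dependent, for every such $\tau$. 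This is the key flexibility: the \emph{left-hand side} of the claimed equivalence is invariant under Frobenius twists.

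Next I would connect the two Moore matrices through their determinants. Consider the matrix $M_{A,\sigma}$ whose $(i,j)$ entry is $\alpha_i^{\sigma^{\,j}} = \alpha_i^{q^{sj}}$. Because $\gcd(s,n)=1$, the map $j\mapsto sj \pmod n$ is a bijection of $\{0,1,\dots,n-1\}$, so the columns of $M_{A,\sigma}$ are, up to a reordering of exponents, obtained by raising the classical Moore matrix entries to a uniform Frobenius power. Concretely I would compare $M_{A,\sigma}$ with the classical Moore matrix $M_{A,q}$ built from the \emph{same} set $A$ but with step $q$: one checks that $\det(M_{A,\sigma})$ differs from $\det(M_{A^{q^{?}},q})$ only by a reindexing of columns, hence only by a sign, and by applying the automorphism-invariance above the two determinants vanish simultaneously. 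The cleanest route is to express $\det(M_{A,\sigma})$ itself as a product $\prod_{\mathbf c}L_{\mathbf c}(A)$ where each $L_{\mathbf c}$ is a $\sigma$-version of the linear form; then vanishing of $\det(M_{A,\sigma})$ is equivalent to $L_{\mathbf c}(A)=0$ for some nonzero $\mathbf c$, and since $L_{\mathbf c}$ is a $\sigma$-linearized polynomial evaluated on a fixed combination, its kernel detects exactly $\F_q$-dependence.

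The main obstacle I expect is justifying the factorization \eqref{detMoore} in the $\sigma$-twisted setting, i.e.\ producing the analogue of the product formula when $s\neq 1$, rather than quoting it. Two ways around this present themselves. The slicker one, as the parenthetical remark in the excerpt hints, is to avoid reproving the determinant formula at all and instead invoke the forthcoming Theorem~\ref{connection}: if that result identifies $A$ being $\F_q$-dependent with degeneracy of an associated rank-metric or Moore-type object governed by $\sigma$, then the equivalence follows as a special case. Absent that, the self-contained argument is: show $M_{A,\sigma}$ is singular $\iff$ its columns (as vectors in $\F_{q^n}^k$) are $\F_{q^n}$-linearly dependent $\iff$ there exist $\lambda_j\in\F_{q^n}$, not all zero, with $\sum_j\lambda_j\,\alpha_i^{\sigma^{\,j}}=0$ for all $i$, i.e.\ the $\sigma$-polynomial $\sum_j\lambda_j X^{\sigma^{\,j}}$ annihilates all of $A$; such a nonzero $\sigma$-polynomial of degree $<k$ in $\sigma$ exists iff the $\alpha_i$ span an $\F_q$-space of dimension $<k$, which is $\F_q$-linear dependence. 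Since $\gcd(s,n)=1$, $\sigma$ generates the same cyclic group as $q$, so the theory of $\sigma$-linearized (additive) polynomials—in particular the bound that a nonzero one of $\sigma$-degree $<k$ has an $\F_q$-kernel of dimension $<k$—applies verbatim, and this is where the coprimality hypothesis is actually used.
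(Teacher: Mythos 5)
Your final argument is essentially the paper's own: the paper establishes this statement via Remark \ref{prof}, i.e.\ by specialising Theorem \ref{connection} to $\cT=\{0,1,\ldots,k-1\}$ and invoking the fact (cited to Gow--Quinlan) that a nonzero $\sigma$-polynomial of $\sigma$-degree $<k$ with $\gcd(s,n)=1$ has at most $q^{k-1}$ roots, which is exactly your column-dependence/annihilating-polynomial chain. One caveat: your preliminary reduction of $M_{A,\sigma}$ to the $s=1$ Moore matrix by ``reindexing columns'' only works when $k=n$ (for $k<n$ the exponents $q^{sj}$, $0\le j\le k-1$, reduced modulo $q^n$, are not a permutation of $q^0,\ldots,q^{k-1}$), but this is harmless since you discard that route in favour of the correct one.
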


Assume $\gcd(s,n)=1$ and take any set of pairwise distinct integers $\cT=\{t_0,t_1,\ldots,t_{k-1}\}$ with $0\leq t_0<t_1<\ldots<t_{k-1}<n$ and $A=\{\alpha_0,\alpha_1,\ldots,\alpha_{k-1}\} \subseteq \F_{q^n}$, $k\leq n$.
Put $\sigma=q^s$ and let
\begin{equation}
\label{Moore2}
M_{\cT,\,A,\,\sigma}:=\left(
\begin{matrix}
\alpha_0^{\sigma^{t_0}} & \alpha_0^{\sigma^{t_1}} & \cdots & \alpha_0^{\sigma^{t_{k-1}}} \\
\alpha_1^{\sigma^{t_0}} & \alpha_1^{\sigma^{t_1}} & \cdots & \alpha_1^{\sigma^{t_{k-1}}} \\
\vdots&  \vdots & \ddots &  \vdots \\
\alpha_{k-1}^{\sigma^{t_0}} & \alpha_{k-1}^{\sigma^{t_1}} & \cdots & \alpha_{k-1}^{\sigma^{t_{k-1}}}
\end{matrix}
\right).
\end{equation}

As before, $\sigma$ will be omitted when it is clear from the context.
It is easy to see that if the elements of $A$ are $\F_q$-linearly dependent, then $\det(M_{\cT, A})=0$. Regarding the other direction we have the following.

\begin{theorem}\label{connection}
	Assume that $s$ satisfies $\gcd(s,n)=1$ and put $\sigma=q^s$. The set of $q$-polynomials
	\begin{equation}
	\label{codice}
	\{a_0 X^{\sigma^{t_0}}+a_1 X^{\sigma^{t_1}}+\ldots+a_{k-1}X^{\sigma^{t_{k-1}}}\colon a_0,a_1,\ldots,a_{k-1}\in \F_{q^n}\}
	\end{equation}
	is an MRD code (with maximum idealisers) if and only if
	for any $A=\{\alpha_0,\alpha_1,\ldots,\alpha_{k-1}\} \subseteq \F_{q^n}$, $k\leq n$, $\det(M_{\cT,A})=0$ implies that the elements of $A$ are $\F_q$-linearly dependent.
\end{theorem}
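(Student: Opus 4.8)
The plan is to translate the MRD property of $\cC$ into a statement about the roots of its elements, and then to recognise the linear system defining those roots as the one governed by $M_{\cT,A}$. First I would settle the counting. Since $\gcd(s,n)=1$, the exponents $\sigma^{t_0},\ldots,\sigma^{t_{k-1}}$ reduce modulo $X^{q^n}-X$ to $k$ pairwise distinct monomials, so $(a_0,\ldots,a_{k-1})\mapsto \sum_i a_i X^{\sigma^{t_i}}$ is injective and $\#\cC=q^{kn}$. Writing $d=d(\cC)$, the Singleton-like bound $\#\cC\le q^{n(n-d+1)}$ forces $d\le n-k+1$, so $\cC$ is MRD exactly when $d=n-k+1$, i.e.\ when $d\ge n-k+1$. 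Because the rank of the matrix associated with a $q$-polynomial $f$ equals $n-\dim_{\F_q}\ker f$ and the set of roots of $f$ is an $\F_q$-subspace of $\F_{q^n}$, this is equivalent to the assertion that every nonzero $f\in\cC$ satisfies $\dim_{\F_q}\ker f\le k-1$. Thus $\cC$ is MRD if and only if no nonzero element of $\cC$ vanishes on a $k$-dimensional $\F_q$-subspace.

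The crux is the following dictionary. Fix $A=\{\alpha_0,\ldots,\alpha_{k-1}\}\subseteq\F_{q^n}$ and require a polynomial $f(X)=\sum_{i=0}^{k-1}a_iX^{\sigma^{t_i}}\in\cC$ to vanish at every $\alpha_j$. The conditions $f(\alpha_j)=\sum_{i=0}^{k-1}a_i\alpha_j^{\sigma^{t_i}}=0$, for $j=0,\ldots,k-1$, form a homogeneous linear system in the unknowns $a_0,\ldots,a_{k-1}$ over $\F_{q^n}$ whose coefficient matrix is precisely $M_{\cT,A}$. Hence there is a \emph{nonzero} $f\in\cC$ with $A\subseteq\ker f$ if and only if $\det(M_{\cT,A})=0$.

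With this dictionary both implications become short. For the forward direction I would argue the contrapositive of the determinant condition: assuming $\cC$ is MRD, suppose $A$ is $\F_q$-linearly independent yet $\det(M_{\cT,A})=0$. The dictionary then produces a nonzero $f\in\cC$ vanishing on the $\F_q$-span $\la A\ra$, which has dimension $k$; as $\ker f$ is an $\F_q$-subspace containing $A$, we get $\dim_{\F_q}\ker f\ge k$, contradicting the characterisation above. For the converse, assume the determinant condition but suppose $\cC$ is not MRD. Then some nonzero $f\in\cC$ has $\dim_{\F_q}\ker f\ge k$, so $\ker f$ contains $k$ linearly independent elements $\alpha_0,\ldots,\alpha_{k-1}$; taking $A$ to be this set, the dictionary forces $\det(M_{\cT,A})=0$ while $A$ is independent, contradicting the hypothesis. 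The parenthetical "with maximum idealisers" is automatic: $\{bX\colon b\in\F_{q^n}\}$ lies in both $L(\cC)$ and $R(\cC)$, since $\sum_i a_i(bX)^{\sigma^{t_i}}$ and $b\sum_i a_iX^{\sigma^{t_i}}$ are again of the prescribed shape, and since each idealiser embeds into $\F_{q^n}$ by \cite[Corollary 5.6]{lunardon_kernels_2017}, both idealisers are isomorphic to $\F_{q^n}$.

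I expect no deep obstacle; the only points demanding care are the bookkeeping that promotes a lower bound on the minimum distance to the full MRD property through the Singleton bound, and the verification that the roots of $f$ genuinely form an $\F_q$-subspace, so that $\F_q$-linear (in)dependence of $A$ is exactly the right notion. The genuine content lies in identifying the matrix of the root-system with $M_{\cT,A}$, after which everything collapses to the elementary equivalence "the system has a nonzero solution $\iff$ its determinant vanishes".
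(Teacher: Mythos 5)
Your proposal is correct and follows essentially the same route as the paper: both identify the vanishing of $f=\sum_i a_iX^{\sigma^{t_i}}$ on $A$ with the homogeneous linear system whose coefficient matrix is $M_{\cT,A}$, so that $\det(M_{\cT,A})=0$ is equivalent to the existence of a nonzero element of the code killing all of $A$, and then both directions follow from the characterisation of MRD as ``every nonzero element has kernel of $\F_q$-dimension at most $k-1$.'' Your direct verification that the maps $x\mapsto bx$ lie in both idealisers (combined with the size bound from \cite[Corollary 5.6]{lunardon_kernels_2017}) is a slightly more explicit justification of the parenthetical claim than the paper's appeal to Theorem \ref{th:classification}, but the substance is the same.
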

\begin{proof}
	Note that $\det(M_{\cT,A})=0$ for some $k$-subset $A\subseteq \F_{q^n}$ if and only if the columns of $M_{\cT,A}$ are dependent over $\F_{q^n}$ which holds if and only if there exist $a_0,a_1,\ldots,a_{k-1}\in \F_{q^n}$, not all of them zero, such that
	\[\sum_{j=0}^{k-1} \alpha_i^{\sigma^{t_j}} a_j =0\]
	holds for $i\in \{0,1,\ldots,k-1\}$. Equivalently, the elements of $A$ are roots of
	\begin{equation}
	\label{need}
	a_0X^{\sigma^{t_0}}+a_1X^{\sigma^{t_1}}+\ldots+a_{k-1}X^{\sigma^{t_{k-1}}}.
	\end{equation}
	
	If \eqref{codice} is an MRD code, then \eqref{need} cannot have $q^k$ roots and hence for any
	$k$-subset $A$ of $\F_q$-linearly independent elements we obtain $\det(M_{\cT,A})\neq 0$.
	
	On the other hand, if $\cT$ has been choosen such that $\det(M_{\cT,A})=0$ implies the $\F_q$-dependence of the elements in $A$ for any $k$-subset $A\subseteq \F_{q^n}$, then the non-zero polynomials of \eqref{codice} have less than $q^k$ roots and hence \eqref{codice} is an MRD code.
	
	By Theorem \ref{th:classification}, if $\eqref{codice}$ is an MRD code, then it has maximum idealisers.
\end{proof}


\begin{remark}
	\label{prof}
	It follows from Theorem \ref{connection} with $t_i=i$ for $i\in \{0,1,\ldots,k-1\}$ that Moore's Theorem \ref{Mooremtx} is equivalent to the fact that generalized Gabidulin codes are MRD codes.
\end{remark}

For a $k$-subset $\cT$ of $\{0,1,\ldots,n-1\}$, let $V_\cT$ denote the hypersurface of $\PG(k-1,\K)$, where $\K$ is the algebraic closure of $\F_{q}$, defined by the polynomial
\[
\det\left(\begin{matrix}
	X_0^{\sigma^{t_0}} & X_0^{\sigma^{t_1}} & \cdots & X_0^{\sigma^{t_{k-1}}} \\
	X_1^{\sigma^{t_0}} & X_1^{\sigma^{t_1}} & \cdots & X_1^{\sigma^{t_{k-1}}} \\
	\vdots&  \vdots & \ddots &  \vdots \\
	X_{k-1}^{\sigma^{t_0}} & X_{k-1}^{\sigma^{t_1}} & \cdots & X_{k-1}^{\sigma^{t_{k-1}}} \\
\end{matrix} \right)\in \F_{q}[X_0,X_1,\ldots,X_{k-1}].
\]

The following will be used in Section \ref{sec:nonexistence} to prove the nonexistence result.

\begin{theorem}
	\label{confront}
	Fix $\sigma=q^s$ where $s$ is an integer such that $\gcd(s,n)=1$.
	Let $\cS=\{s_0,s_1,\ldots,s_{k-1}\}$ and $\cT=\{t_0,t_1,\ldots,t_{k-1}\}$ be two subsets of $\{0,1,\ldots,n-1\}$ and suppose that
	\[\cC_{\cT}:=\{a_0X^{\sigma^{t_0}}+a_1X^{\sigma^{t_1}}+\ldots+a_{k-1}X^{\sigma^{t_{k-1}}}\colon a_0,a_1,\ldots,a_{k-1}\in \F_{q^n}\}\]
	is an MRD code.
	Then
	\[\cC_{\cS}:=\{a_0X^{\sigma^{s_0}}+a_1X^{\sigma^{s_1}}+\ldots+a_{k-1}X^{\sigma^{s_{k-1}}}\colon a_0,a_1,\ldots,a_{k-1}\in \F_{q^n}\}\]
	is an MRD code if and only if there are no $\F_{q^n}$-rational points in $V_{\cS}\setminus V_{\cT}$.
\end{theorem}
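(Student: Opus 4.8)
The plan is to translate the MRD condition into a statement about $\F_{q^n}$-rational points of the two hypersurfaces and then combine it with the hypothesis on $\cC_\cT$. First I would record the geometric reformulation of Theorem \ref{connection}: for a $k$-subset $\cU\subseteq\{0,\ldots,n-1\}$ and a nonzero tuple $(\alpha_0,\ldots,\alpha_{k-1})\in\F_{q^n}^k$ representing a point $P\in\PG(k-1,\K)$, one has $P\in V_\cU$ precisely when $\det(M_{\cU,A})=0$ for $A=\{\alpha_0,\ldots,\alpha_{k-1}\}$. Since each column of $M_{\cU,A}$ scales by a common factor under $\alpha_i\mapsto\lambda\alpha_i$, the defining determinant is homogeneous and its vanishing is well defined on $\PG(k-1,\K)$, while $\F_q$-linear dependence of the coordinates is likewise a projective condition. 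Thus Theorem \ref{connection} says exactly that $\cC_\cU$ is MRD if and only if every $\F_{q^n}$-rational point of $V_\cU$ has $\F_q$-linearly dependent coordinates.

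Next I would isolate the one inclusion that holds unconditionally. As noted just before Theorem \ref{connection}, if the coordinates of $P$ are $\F_q$-linearly dependent then $\det(M_{\cU,A})=0$, so $P\in V_\cU$, and this holds for \emph{every} $\cU$. Writing $\Delta$ for the set of $\F_{q^n}$-rational points of $\PG(k-1,\K)$ with $\F_q$-linearly dependent coordinates (equivalently, the $\F_{q^n}$-points lying on some $\F_q$-rational hyperplane), this gives $\Delta\subseteq V_\cU(\F_{q^n})$ for every $\cU$. Combined with the previous paragraph, $\cC_\cU$ is MRD if and only if $V_\cU(\F_{q^n})=\Delta$.

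Finally I would feed in the hypothesis that $\cC_\cT$ is MRD, which by the above reads $V_\cT(\F_{q^n})=\Delta$. In particular, applying the unconditional inclusion to $\cS$ yields $V_\cT(\F_{q^n})=\Delta\subseteq V_\cS(\F_{q^n})$. Now $\cC_\cS$ is MRD if and only if $V_\cS(\F_{q^n})=\Delta=V_\cT(\F_{q^n})$; because the inclusion $V_\cT(\F_{q^n})\subseteq V_\cS(\F_{q^n})$ already holds, this is equivalent to $V_\cS(\F_{q^n})\subseteq V_\cT(\F_{q^n})$, i.e.\ to the absence of $\F_{q^n}$-rational points in $V_\cS\setminus V_\cT$, which is the claim.

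There is no deep obstacle here: the entire content sits in the two directions of Theorem \ref{connection} together with the trivial implication that $\F_q$-dependence forces the Moore determinant to vanish, after which the result is a short Boolean combination of these facts. The only point demanding care is the bookkeeping between tuples and projective points, namely checking the projective well-definedness of both the determinantal condition and of $\F_q$-dependence, and verifying that the passage between $V_\cU$ as a hypersurface over $\K$ and its $\F_{q^n}$-rational points introduces no degenerate cases (e.g.\ tuples with repeated coordinates, which fall harmlessly into $\Delta$ and simultaneously onto every $V_\cU$).
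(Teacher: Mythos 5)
Your proposal is correct and follows essentially the same route as the paper: both arguments reduce the statement to the two directions of Theorem \ref{connection} together with the unconditional fact that $\F_q$-dependent coordinates force the Moore determinant to vanish, so that a code $\cC_\cU$ is MRD precisely when the $\F_{q^n}$-rational points of $V_\cU$ are exactly the points with $\F_q$-dependent coordinates, after which the claim is immediate set-theoretic bookkeeping. Your explicit isolation of the inclusion $\Delta\subseteq V_\cU(\F_{q^n})$ and the remark about repeated coordinates merely make precise what the paper's proof uses implicitly.
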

\begin{proof}
	According to Theorem \ref{connection} the $\F_{q^n}$-rational points of $V_{\cT}$ are
	\[\cL:=\{\langle (\alpha_0,\alpha_1,\ldots,\alpha_{k-1})\rangle_{\F_{q^n}}\in \PG(k-1,q^n) \colon \dim\langle \alpha_0,\alpha_1,\ldots,\alpha_{k-1}\rangle_{\F_q}<k\}.\]
	If $\cC_{\cS}$ is also an MRD code, then again from Theorem \ref{connection} the set of $\F_{q^n}$-rational points of $V_{\cS}$ coincides with the point set $\cL$. On the other hand if
	there exists $\langle(\alpha_0,\alpha_1,\ldots,\alpha_{k-1})\rangle_{\F_{q^n}}\in V_{\cS}\setminus V_{\cT}$, then $\dim\langle \alpha_0,\alpha_1,\ldots,\alpha_{k-1}\rangle_{\F_q}=k$ and with $A=\{\alpha_0,\alpha_1,\ldots,\alpha_{k-1}\}$ we have $\det(M_{\cS,A})=0$. Theorem \ref{connection}   yields that $\cC_{\cS}$ is not an MRD code.
\end{proof}

\section{Constructions and classifications}\label{sec:constructions}

In this section our aim is to classify $\F_q$-linear MRD codes with maximum idealisers in $\F_q^{n \times n}$ with $n\leq 9$. In terms of linearized polynomials, by Theorem \ref{th:classification} it is equivalent to find $k$-subsets $\cT:=\{t_0,t_1,\ldots, t_{k-1}\}$ of $\{0,1,\ldots,n-1\}$ such that the non-zero polynomials in
\[\cC_{\cT}:=\{a_0X^{q^{t_0}}+a_1X^{q^{t_1}}+\ldots+a_{k-1}X^{q^{t_{k-1}}}\colon a_0,a_1,\ldots,a_{k-1}\in \F_{q^n}\}\]
have at most $q^k$ roots.

Clearly, if $k=1$, then we obtain generalized Gabidulin codes with minimum distance $n$.

\begin{proposition}
	\label{k=2}
Let $\cT=\{t_0,t_1,\ldots, t_{k-1}\}\subseteq\{0,1,\ldots,n-1\}$. If $\cC_{\cT}$ is an MRD code then
$\gcd(t_i-t_j,n)<k$ for each $i\neq j$, $i,j\in \{0,1,\ldots,k-1\}$.
\end{proposition}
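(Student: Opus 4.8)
The plan is to prove the contrapositive: assuming that some pair of exponents satisfies $\gcd(t_i-t_j,n)\ge k$, I would exhibit a single nonzero element of $\cC_\cT$ whose number of roots is too large, contradicting the fact that $\cC_\cT$ is MRD. The relevant reformulation of the hypothesis is this: since $\cC_\cT$ has $q^{kn}$ elements, it is MRD precisely when its minimum distance equals $n-k+1$; equivalently, every nonzero $f\in\cC_\cT$, viewed as an $\F_q$-linear endomorphism of $\F_{q^n}$, has $\F_q$-kernel of dimension at most $k-1$, i.e. strictly fewer than $q^{k}$ roots.

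First I would pick $i\ne j$ with $d:=\gcd(t_i-t_j,n)\ge k$ and write down the single binomial
\[
f(X):=X^{q^{t_i}}-X^{q^{t_j}}.
\]
This is a nonzero member of $\cC_\cT$, corresponding to taking the coefficient of $X^{q^{t_i}}$ equal to $1$, that of $X^{q^{t_j}}$ equal to $-1$, and all remaining coefficients $0$. Thus the whole argument reduces to counting the roots of this one polynomial.

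Next I would carry out that root count through the subfield structure of $\F_{q^n}$. Assuming $t_i<t_j$ and setting $m:=t_j-t_i$, the substitution $Z:=X^{q^{t_i}}$ is a bijection of $\F_{q^n}$, and $f(X)=0$ becomes $Z=Z^{q^{m}}$. The solutions $Z$ are exactly the elements of $\F_{q^n}$ fixed by the $m$-th power of the Frobenius, that is $Z\in\F_{q^n}\cap\F_{q^{m}}=\F_{q^{\gcd(m,n)}}=\F_{q^{d}}$; there are $q^{d}$ of them. Since $X\mapsto Z$ is a bijection, $f$ has exactly $q^{d}$ roots in $\F_{q^n}$. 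As $d\ge k$, this yields at least $q^{k}$ roots, so the rank of $f$ is $n-d\le n-k$, contradicting the minimum distance $n-k+1$ of the MRD code $\cC_\cT$. Hence no such pair exists and $\gcd(t_i-t_j,n)<k$ for every $i\ne j$.

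I do not anticipate a genuine obstacle here: the argument is just an exact count of the roots of a binomial $q$-polynomial. The only points that require care are the standard identification of the fixed field of the $m$-th Frobenius power with $\F_{q^{\gcd(m,n)}}$, and the bookkeeping that a $q$-linearized polynomial with $q^{d}$ roots has rank $n-d$, which is what converts the root bound into the minimum-distance condition. The key idea worth isolating is that one never needs more than two terms: a suitable binomial already forces a subfield $\F_{q^{d}}$ into its kernel.
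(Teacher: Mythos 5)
Your proof is correct and takes essentially the same route as the paper: both exhibit the binomial $X^{q^{t_i}}-X^{q^{t_j}}=(X^{q^{t_i-t_j}}-X)^{q^{t_j}}\in\cC_{\cT}$ and observe that its kernel contains $\F_{q^{d}}$ with $d=\gcd(t_i-t_j,n)$, violating the MRD root bound when $d\geq k$. The only (immaterial) difference is that you compute the exact root count $q^{d}$ via the substitution $Z=X^{q^{t_i}}$, whereas the paper settles for the lower bound given by the inclusion $\F_{q^{d}}\subseteq\ker f$.
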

\begin{proof}
We may assume $t_j<t_i$ and put $s=\gcd(t_j-t_i,n)$. It is enough to observe that the elements of $\F_{q^s}\subseteq \F_{q^n}$ are roots of $(X^{q^{t_i-t_j}}-X)^{q^{t_j}}\in \cC_{\cT}$ and hence if $s\geq k$, then $\cC_{\cT}$ is not an MRD code.
\end{proof}

If $k=2$, then by Proposition \ref{k=2} we have to consider polynomials of the form
\[\{a_0X^{q^{t_0}}+a_1X^{q^{t_1}} \colon a_0,a_1 \in \F_{q^n}\},\]
with $\gcd(t_1-t_0,n)=1$. These codes are clearly equivalent to generalized Gabidulin codes.

Applying Delsarte dual operation we may always assume $k\leq n/2$, since $\cC_{\cT}^\perp=\cC_{\cT'}$ where $\cT'=\{0,1,\ldots,n-1\}\setminus \cT$.
As $\cC_{\cT}$ is equivalent to $\cC_{\cT'}$ (cf. Theorem \ref{th:equivalence}) for every $\cT'=\cT+s:=\{t+s \pmod n \colon t\in \cT\}$, we may also assume $0\in \cT$.

Applying now the adjoint operation we may further assume that for $k>1$ there exists $1\leq i \leq n/2$ such that $i\in \cT$. This is because if $0\in \cT$ then $\cC_{\cT}^\top=\cC_{\cT'}$ where $\cT'=\{0\} \cup \{ n-i \colon i\in \cT,\, i\neq 0\}$.

It follows that for $n\leq 5$ the MRD codes with both idealisers isomorphic to $\F_{q^n}$ are equivalent to generalized Gabidulin codes.

Now consider $n=6$ and $k=3$. It is enough to consider polynomial subspaces of the form
\[\{a_0X+a_1X^{q^{t_1}}+a_2X^{q^{t_2}} \colon a_0,a_1,a_2 \in \F_{q^6}\},\]
with $t_1\in \{1,2\}$ and $t_1<t_2$. From Proposition \ref{k=2} we have $\gcd(t_2,6)\leq 2$ and $\gcd(t_2-t_1,6)\leq 2$. If $t_1=1$ then we get $t_2\in\{2,5\}$ and both cases yield codes equivalent to  Gabidulin codes. If $t_1=2$ then $t_2=4$ but then $\Tr_{q^6/q^2}(X)$ is in the code, a contradiction since it has $q^4$ roots in $\F_{q^6}$. Thus we have proved the following.

\begin{proposition}
	\label{n=6}
If $n\leq 6$ then MRD codes with both idealisers isomorphic to $\F_{q^n}$ are equivalent to generalized Gabidulin codes.
\end{proposition}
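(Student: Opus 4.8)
The plan is to reduce the statement to a short finite case check on exponent sets. By Theorem \ref{th:classification} I may assume $\cC=\cC_\cT$ for some $k$-subset $\cT\subseteq\{0,1,\dots,n-1\}$, in which case being MRD is equivalent to the requirement that every non-zero polynomial of $\cC_\cT$ have fewer than $q^k$ roots in $\F_{q^n}$. Three operations let me normalize $\cT$: complementation $\cT\mapsto\{0,\dots,n-1\}\setminus\cT$ (the Delsarte dual $\perp$), the cyclic shifts $\cT\mapsto\cT+s$ (equivalences, by Theorem \ref{th:equivalence}), and the reflection $\cT\mapsto\{-t\bmod n:t\in\cT\}$ (the adjoint $\top$). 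Thus $\cT$ may be replaced by any representative of its orbit under the dihedral group on $\Z/n$ together with complementation, and in particular I may assume $k\le n/2$ and $0\in\cT$.

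For $n\le 5$ this already forces $k\le 2$, and both values are harmless: $k=1$ is a generalized Gabidulin code by definition, while for $k=2$ Proposition \ref{k=2} forces the two exponents to differ by an integer coprime to $n$, which again gives a generalized Gabidulin code. Hence the only genuinely new case is $n=6$, $k=3$. Writing $\cT=\{0,t_1,t_2\}$ with $0<t_1<t_2\le 5$, I note that the three cyclic gaps of $\cT$ sum to $6$, so the smallest gap is at most $2$; after a shift placing one end of this gap at $0$, I may assume $t_1\in\{1,2\}$.

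Now Proposition \ref{k=2} requires $\gcd(t_i-t_j,6)\le 2$ for every pair, and a direct check discards every choice except $\cT\in\{\{0,1,2\},\{0,1,5\},\{0,2,4\}\}$. The first two are blocks of consecutive exponents up to a cyclic shift, hence equivalent to $\cG_{3,1}$ by Theorem \ref{th:equivalence}. The candidate $\{0,2,4\}$ is exactly where I expect the real obstacle to sit: each of its pairwise differences has $\gcd$ precisely $2<3=k$, so it passes the divisibility test of Proposition \ref{k=2} and cannot be excluded by that criterion. To kill it I would exhibit an explicit singular element of large kernel: taking $a_0=a_1=a_2=1$ gives the relative trace $\Tr_{q^6/q^2}(X)=X+X^{q^2}+X^{q^4}$, whose kernel has $\F_q$-dimension $4$, so it has $q^4\ge q^3$ roots and $\cC_{\{0,2,4\}}$ is not MRD. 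The only surviving code is therefore equivalent to $\cG_{3,1}$, which proves the proposition.

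The lesson I would highlight is that Proposition \ref{k=2} is necessary but not sufficient: an arithmetic-progression set such as $\{0,2,4\}$ can satisfy all the gcd constraints yet fail the MRD property through a relative-trace element. Consequently the harder cases $n\ge 7$ will need a systematic supply of such low-rank witnesses rather than ad hoc ones, which is precisely the role played by the Moore-determinant reformulation in Theorem \ref{connection}.
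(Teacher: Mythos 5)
Your proof is correct and follows essentially the same route as the paper: reduction to $\cC_\cT$ via Theorem \ref{th:classification}, normalization using shifts, the adjoint and the Delsarte dual to force $k\le n/2$ and $0\in\cT$, pruning by the gcd criterion of Proposition \ref{k=2}, and elimination of the one surviving non-Gabidulin candidate $\{0,2,4\}$ by observing that $\Tr_{q^6/q^2}(X)$ lies in the code and has $q^4$ roots. The only cosmetic difference is that you justify $t_1\in\{1,2\}$ by a cyclic-gap count where the paper invokes the adjoint operation directly; both yield the same case list.
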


Using a similar argument together with Theorem \ref{th:equivalence}, we can exclude most of the possibilities also for $n=7,8,9$ and obtain that, up to $\perp$ and $\top$ operations if an MRD code $\cC_{\cT}$ with $\cT\subseteq \{0,1,\ldots,n-1\}$ has maximum left and right idealisers and it is not equivalent to generalized Gabidulin codes then up to equivalence it has to have one of the following form:
\begin{enumerate}
	\item $n\in\{7,8\}$, $k=3$ and $\cT=\{0,1,3\}$,
	\item $n=9$, $k=4$ and $\cT=\{0,s,2s,4s\}$, where $s\in \{1,4,7\}$ and the elements of $\cT$ are considered modulo 9.
\end{enumerate}

As we will see, in the first case we have MRD codes under certain conditions on $q$ while in the second case we never obtain MRD codes.

We recall the following result on $q$-polynomials which we will use frequently.
Let $f(X)=\sum_{i=0}^{n-1}a_i X^{q^i}$ with $a_0,a_1,\ldots,a_{n-1}\in\F_{q^n}$ and let $D_f$ denote the associated \emph{Dickson matrix} (or \emph{$q$-circulant matrix})
\[D_f:=
\begin{pmatrix}
a_0 & a_1 & \ldots & a_{n-1} \\
a_{n-1}^q & a_0^q & \ldots & a_{n-2}^q \\
\vdots & \vdots & \vdots & \vdots \\
a_1^{q^{n-1}} & a_2^{q^{n-1}} & \ldots & a_0^{q^{n-1}}
\end{pmatrix}
.\]
Then the rank of $D_f$ equals the rank of $f$ viewed as an $\F_q$-linear transformation of $\F_{q^n}$, see for example \cite{wuliu}.

\subsection{The $n=7$ case}

\begin{theorem}	\label{thm1}
    The set of $q$-polynomials
	\begin{equation}
	\label{Cn7}
\cC_7:=\{a_0X+a_1X^q+a_2X^{q^3} \colon a_0,a_1,a_2\in \F_{q^7}\}
	\end{equation}
	is an $\F_q$-linear MRD code with left and right idealisers isomorphic to $\F_{q^7}$ if and only if $q$ is odd.
	Moreover, $\cC_7$ is not equivalent to the previously known MRD codes.
\end{theorem}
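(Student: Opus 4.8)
The plan is to break Theorem \ref{thm1} into three separate tasks: first establishing the MRD/idealiser characterization via the determinantal criterion of Theorem \ref{connection}, then pinning down the exact condition ``$q$ odd'', and finally proving inequivalence to all previously known families. For the first two tasks I would apply Theorem \ref{connection} directly with $\cT=\{0,1,3\}$ and $\sigma=q$ (so $s=1$, and $\gcd(1,7)=1$ holds trivially). By that theorem, $\cC_7$ is MRD with maximum idealisers precisely when $\det(M_{\cT,A})=0$ forces $\F_q$-linear dependence of $A=\{\alpha_0,\alpha_1,\alpha_2\}$, where
\[
M_{\cT,A}=\begin{pmatrix}
\alpha_0 & \alpha_0^{q} & \alpha_0^{q^3}\\
\alpha_1 & \alpha_1^{q} & \alpha_1^{q^3}\\
\alpha_2 & \alpha_2^{q} & \alpha_2^{q^3}
\end{pmatrix}.
\]
So I would expand this $3\times3$ determinant and analyze when it can vanish on an $\F_q$-independent triple. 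The natural normalization is to assume the three elements are independent, rescale (using the $\F_{q^n}$-action) so that one coordinate is $1$, and reduce the vanishing condition to a single equation in the remaining variables. The key algebraic step is to show that over $\F_{q^7}$ this equation has a nontrivial solution precisely when $q$ is \emph{even}: I expect the determinant to factor or to reduce, after raising to suitable $q$-power Frobenius twists, to a norm- or trace-type condition whose solvability is governed by the parity of $q$ (for instance an equation of the form $z+z^{q}=$ something, or a quadratic in a Frobenius-twisted variable whose discriminant is a square exactly when $q$ is even).

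For the MRD direction when $q$ is odd, rather than wrestling directly with the determinant I would work with the Dickson matrix $D_f$ introduced just before the theorem. A nonzero $f(X)=a_0X+a_1X^q+a_2X^{q^3}\in\cC_7$ fails to give rank at least $n-k+1=5$ exactly when $D_f$ drops rank by more than $2$; equivalently, I want to bound the number of roots of $f$ by $q^{k-1}=q^2$. The cleanest route is to assume $f$ has a $3$-dimensional (hence $q^3$-element) kernel and derive a contradiction with $q$ odd: a $3$-dimensional $\F_q$-kernel gives three independent roots $\alpha_0,\alpha_1,\alpha_2$ with $\det(M_{\cT,A})=0$, which by the normalized vanishing equation above is impossible for $q$ odd. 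Conversely the non-MRD direction for $q$ even is witnessed by exhibiting (or by the counting argument of the vanishing equation) an independent triple annihilating some nonzero $f$. I would carry out the parity analysis once and use it for both the necessity and sufficiency of ``$q$ odd''.

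The main obstacle, and the place I would spend the most care, is the final inequivalence claim: showing $\cC_7$ is not equivalent to any previously known MRD code. For this the idealiser invariant does all the heavy lifting against most families, since by hypothesis $L(\cC_7)\cong R(\cC_7)\cong\F_{q^7}$, and the only known $\F_q$-linear MRD codes in $\F_{q^n\times n}$ with \emph{both} idealisers maximal are the generalized Gabidulin codes (and their twisted/skew generalizations have one idealiser strictly smaller, by the computations in \cite{lunardon_generalized_2015} and \cite{sheekey_new_arxiv}). So the real work reduces to separating $\cC_7$ from the generalized Gabidulin codes $\cG_{3,s}$ themselves. Here I would invoke Theorem \ref{th:equivalence}: up to the $\top$ and $\perp$ reductions already performed, a generalized Gabidulin code $\cG_{3,s}$ corresponds to an arithmetic-progression exponent set $\cT'=\{0,s,2s\}\pmod 7$, whereas $\cC_7$ has $\cT=\{0,1,3\}$, which is \emph{not} a translate of any $\{0,s,2s\}$ modulo $7$ for $s$ coprime to $7$. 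Since Theorem \ref{th:equivalence} says two such codes are equivalent iff their exponent sets differ by a cyclic shift, this exhibits the inequivalence to Gabidulin codes directly, and one must additionally confirm that passing through the $\top$ and $\perp$ operations (which act on $\cT$ by negation and complementation modulo $7$) never turns $\{0,1,3\}$ into an arithmetic progression. That finite check modulo $7$ is the one genuinely case-based piece, but it is elementary; the conceptual content is entirely carried by Theorems \ref{connection} and \ref{th:equivalence}.
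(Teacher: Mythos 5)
There is a genuine gap at the heart of your proposal: the entire content of the theorem is the claim that for $q$ odd no nonzero $a_0X+a_1X^q+a_2X^{q^3}$ has $q^3$ roots (equivalently, that $\det(M_{\cT,A})$ cannot vanish on an $\F_q$-independent triple), and you do not prove this — you only state that you ``expect'' the determinant to reduce to a norm- or trace-type condition whose solvability is governed by the parity of $q$. No such elementary reduction is carried out, and the actual mechanism is quite different and genuinely combinatorial-geometric: the paper observes that a $3$-dimensional kernel with basis $u_1,u_2,u_3$ gives a point $P=\la(u_1,u_2,u_3)\ra$ of $\PG(2,q^7)$ and a line $\ell$ incident with $P$, $P^\sigma$, $P^{\sigma^3}$ (where $\sigma$ is the Frobenius collineation), and then exploits the fact that $\{0,1,3\}$ is a cyclic $(7,3,1)$-difference set of $\mathbb{Z}_7$ to show that the $\sigma$-orbits of $P$ and $\ell$ form a Fano subplane of $\PG(2,q^7)$ — impossible when $q$ is odd. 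Nothing in your outline points toward this argument, and without it (or a worked-out substitute) the ``MRD when $q$ odd'' direction is unproven. Similarly, for $q$ even you defer to ``exhibiting an independent triple'', whereas the paper produces an explicit witness: $X+X^q+X^{q^3}$, whose Dickson matrix is the point--line incidence matrix of $\PG(2,2)$, hence of $2$-rank $4<5$ by the known $p$-rank formula, giving $q^3$ roots. Your determinantal setup via Theorem \ref{connection} is a legitimate equivalent starting point, but the decisive steps are missing, and the hoped-for ``quadratic discriminant'' shortcut is not how the parity of $q$ enters.

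Your treatment of the inequivalence claim is essentially the paper's: the idealiser invariant eliminates all known families except generalized Gabidulin codes, and Theorem \ref{th:equivalence} separates $\{0,1,3\}$ from the arithmetic-progression exponent sets $\{0,s,2s\}$ up to cyclic shift, negation and complementation modulo $7$. That part is fine, and is in fact spelled out in slightly more detail than the paper gives.
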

\begin{proof}
	The Dickson matrix associated with $f(X)=X+X^q+X^{q^3}\in \F_{q^7}[X]$ is
	\[\begin{pmatrix}
	1 & 1 & 0 & 1 & 0 & 0 & 0 \\
	0 & 1 & 1 & 0 & 1 & 0 & 0 \\
	0 & 0 & 1 & 1 & 0 & 1 & 0 \\
	0 & 0 & 0 & 1 & 1 & 0 & 1 \\
	1 & 0 & 0 & 0 & 1 & 1 & 0 \\
	0 & 1 & 0 & 0 & 0 & 1 & 1 \\
	1 & 0 & 1 & 0 & 0 & 0 & 1 \\
	\end{pmatrix}.
	\]
	This matrix can also be viewed as the incidence matrix of the points and lines of $\PG(2,2)$. It is well-known, and also easy to see, that it has rank four over $\F_2$, hence $f(X)$ has $q^3$ roots, i.e. $\cC_7$ is not an MRD code.
	
	Now let $q$ be odd and suppose to the contrary that $\cC_7$ is not an MRD code. Then there exist $\alpha_1, \alpha_2, \alpha_3 \in \F_{q^7}$ such that $\alpha_1 X + \alpha_2 X^q + \alpha_3 X^{q^3}$ has $q^3$ roots. Clearly these roots form an $\F_q$-subspace of $\F_{q^7}$, let $u_1,u_2,u_3$ be an $\F_q$-basis for this subspace.
	
	Let $\sigma$ denote the collineation of $\PG(2,q^7)$ defined by the following semilinear map of $\F_{q^7}^3$: $(x_1,x_2,x_3) \mapsto (x_1^q,x_2^q,x_3^q)$.
	Let $\Sigma \cong \PG(2,q)$ denote the points of $\PG(2,q^7)$ fixed by $\sigma$.
	Define $P:=\la(u_1,u_2,u_3)\ra_{\F_{q^7}}$ and
note that $P\notin \Sigma$, otherwise $\lambda (u_1,u_2,u_3)=(u_1^q,u_2^q,u_3^q)$ for some $\lambda\in \F_{q^7}^*$, a contradiction since this would mean that $u_1^{q-1}=u_2^{q-1}=u_3^{q-1}$, i.e. $\dim \la u_1,u_2,u_3\ra_{\F_q} = 1$.
It follows that $P$ lies on an orbit of length seven of $\sigma$.
	
	The scalars $\alpha_1, \alpha_2, \alpha_3$ show that the columns of the matrix
	\[M:=
	\begin{pmatrix}
	u_1 & u_1^q & u_1^{q^3} \\
	u_2 & u_2^q & u_2^{q^3} \\
	u_3 & u_3^q & u_3^{q^3}
	\end{pmatrix}	\]
	are $\F_{q^7}$-linearly dependent and hence also the rows of $M$ are $\F_{q^7}$-linearly dependent, which shows that there exists a line $\ell$ of $\PG(2,q^7)$ which is incident with $P$, $P^\sigma$ and $P^{\sigma^3}$.
	
	First we show that $\ell$ is not a line of $\Sigma$, which is equivalent to say $\ell \neq \ell^\sigma$.
	Suppose the contrary, then $\ell$ has an equation $a_1X_1+a_2X_2+a_3X_3=0$ where $X_1,X_2,X_3$ denote the homogeneous coordinates for points of $\PG(2,q^7)$ and $a_1,a_2,a_3 \in \F_q$. A contradiction since $\dim \la u_1,u_2,u_3\ra_{\F_q}=3$.
	
	Next we show that $\ell$ cannot be tangent to $\Sigma$. Suppose to the contrary that $\ell \cap \Sigma= \{Q\}$ for some point $Q$.
	Then $Q\in \ell\cap \ell^\sigma = \{P^\sigma\}$, a contradiction since $\{P, P^\sigma, P^{\sigma^2},\ldots, P^{\sigma^6}\}$ are not fixed by $\sigma$ hence $P^\sigma=Q$ cannot be a point of $\Sigma$.
	
	Thus $\ell$ lies on an orbit of length 7 of $\sigma$ and since $\{0,1,3\}$ is a cyclic $(7,3,1)$-difference set of $\mathbb{Z}_7$, the cyclic group of order 7 (written additively), we have that the points $\{P, P^\sigma, P^{\sigma^2},\ldots, P^{\sigma^6}\}$ and lines $\{\ell, \ell^\sigma,\ldots,\ell^{\sigma^6}\}$ form a Fano subplane inside $\PG(2,q^7)$. However, it is well known that a Fano plane cannot be embedded in $\PG(2,q)$ if $q$ is odd. Thus we get a contradiction.
	
The last part follows from Theorem \ref{th:equivalence} and from the fact that the only known MRD codes with maximum left and right idealisers are equivalent to the generalized Gabidulin codes.
\end{proof}

As observed in Section \ref{sec:pre}, the Delsarte dual operation preserves the equivalence relations between MRD codes. Hence we have the following result.
\begin{corollary}\label{cor1}
    The set of $q$-polynomials
	\begin{equation}
	\label{Cn7dd}
\cC_7':=\{a_0X+a_1X^{q^3}+a_2X^{q^5}+a_3X^{q^6} \colon a_0,a_1,a_2,a_3\in \F_{q^7}\}
	\end{equation}
	is an $\F_q$-linear MRD code with left and right idealisers isomorphic to $\F_{q^7}$ if and only if $q$ is odd.
	Moreover, $\cC_7'$ is not equivalent to the previously known MRD codes.
\end{corollary}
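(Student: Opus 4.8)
The plan is to realise $\cC_7'$ as a code equivalent to the Delsarte dual of $\cC_7$ and then transport every conclusion of Theorem \ref{thm1} through the $\perp$ operation. Writing $\cC_7=\cC_{\cT}$ with $\cT=\{0,1,3\}$ and $\cC_7'=\cC_{\cT''}$ with $\cT''=\{0,3,5,6\}$, I would first use the fact recalled in Section \ref{sec:constructions} that $\cC_{\cT}^\perp=\cC_{\cT'}$, where $\cT'=\{0,1,\dots,6\}\setminus\cT=\{2,4,5,6\}$. Next I would compare $\cT'$ and $\cT''$ via Theorem \ref{th:equivalence}: since $\cT'+1=\{3,5,6,0\}=\{0,3,5,6\}=\cT''\pmod 7$, the two codes $\cC_{\cT'}$ and $\cC_{\cT''}$ are equivalent, so $\cC_7'$ is equivalent to $\cC_7^\perp$.

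With this identification in hand, I would invoke the two preservation facts already established in the introduction and in Section \ref{sec:pre}. The Delsarte dual of a linear MRD code is again a linear MRD code, and by \cite[Proposition 4.2]{lunardon_kernels_2017} the $\perp$ operation preserves the property of having both idealisers isomorphic to $\F_{q^n}$. Combined with the equivalence $\cC_7'\sim\cC_7^\perp$, this shows that $\cC_7'$ is an MRD code with maximum left and right idealisers if and only if $\cC_7$ is, which by Theorem \ref{thm1} happens exactly when $q$ is odd. For the inequivalence assertion I would appeal to the Proposition of Section \ref{sec:pre}, according to which equivalence to a generalized Gabidulin code is invariant under the $\perp$ and $\top$ operations: since $\cC_7$ is not equivalent to a generalized Gabidulin code, neither is $\cC_7'$, and as the only previously known MRD codes with maximum idealisers are the generalized Gabidulin ones, $\cC_7'$ is not equivalent to any of them.

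I expect no genuine obstacle here, since the whole argument is bookkeeping with exponent sets together with the already-proved invariance of the MRD property, of the maximum-idealiser property, and of Gabidulin-equivalence under duality. The one step requiring a moment's care is verifying the correct cyclic shift $s=1$ relating $\cT'=\{2,4,5,6\}$ to $\cT''=\{0,3,5,6\}$, so that Theorem \ref{th:equivalence} applies; once that is checked, everything else is a direct citation of prior results.
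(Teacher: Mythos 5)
Your proposal is correct and follows exactly the paper's own route: the paper derives Corollary \ref{cor1} from Theorem \ref{thm1} by noting that the Delsarte dual operation preserves the MRD property, the maximum-idealiser property, and (non-)equivalence to generalized Gabidulin codes, with $\cC_7'$ equivalent to $\cC_7^\perp$ via the cyclic shift you computed. Your verification that $\{2,4,5,6\}+1=\{0,3,5,6\}\pmod 7$ is accurate, and the rest is precisely the bookkeeping the authors leave implicit.
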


\subsection{The $n=8$ case}

\begin{theorem}
	\label{thm2}
	The set of $q$-polynomials
	\begin{equation}
	\label{Cn8}
	\cC_8:=\{a_0X+a_1X^q+a_2X^{q^3} \colon a_0,a_1,a_2\in \F_{q^8}\}
	\end{equation}
	is an $\F_q$-linear MRD code with left and right idealisers isomorphic to $\F_{q^8}$ if and only if $q\equiv 1 \pmod 3$.
	Moreover, $\cC_8$ is not equivalent to the previously known MRD codes.
\end{theorem}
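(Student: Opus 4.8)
The plan is to follow the proof of Theorem~\ref{thm1} as closely as possible, with the Fano plane replaced by the Möbius--Kantor configuration. Applying Theorem~\ref{connection} with $s=1$ (so $\sigma=q$, and $\gcd(1,8)=1$), the code $\cC_8$ fails to be MRD exactly when there is an $\F_q$-linearly independent triple $\{u_1,u_2,u_3\}\subseteq\F_{q^8}$ with
\[
\det\begin{pmatrix} u_1 & u_1^{q} & u_1^{q^{3}}\\ u_2 & u_2^{q} & u_2^{q^{3}}\\ u_3 & u_3^{q} & u_3^{q^{3}}\end{pmatrix}=0 .
\]
Setting $P:=\langle(u_1,u_2,u_3)\rangle_{\F_{q^8}}\in\PG(2,q^8)$ and letting $\sigma$ be the order-$8$ Frobenius collineation $(x_1,x_2,x_3)\mapsto(x_1^{q},x_2^{q},x_3^{q})$, the vanishing of this determinant says precisely that $P$, $P^{\sigma}$ and $P^{\sigma^{3}}$ are collinear, say on a line $\ell$. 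So I would suppose $\cC_8$ is not MRD, fix such $P$ and $\ell$, and read off the arithmetic condition on $q$ from the $\sigma$-invariant incidence structure generated by $P$ and $\ell$.

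First I would pin down the length $d\mid 8$ of the $\sigma$-orbit of $P$, just as for $n=7$. The $\F_q$-independence of $u_1,u_2,u_3$ excludes $d=1$ (which would force $P\in\Sigma=\PG(2,q)$, i.e.\ $\dim_{\F_q}\langle u_1,u_2,u_3\rangle=1$) and $d=2$ (which gives $u_i^{q^{2}-1}=\mu$ for a common $\mu$, so the $u_i$ lie in the single $\F_{q^{2}}$-line $u_1\F_{q^{2}}$ and $\dim_{\F_q}\langle u_1,u_2,u_3\rangle\le 2$). The genuinely new case is $d=4$: here $P^{\sigma^{4}}=P$, so the four lines $\ell,\ell^{\sigma},\ell^{\sigma^{2}},\ell^{\sigma^{3}}$ pass through the four triples $\{0,1,3\},\{0,1,2\},\{1,2,3\},\{0,2,3\}$ of the points $P,P^{\sigma},P^{\sigma^{2}},P^{\sigma^{3}}$; since every three of these four points are then collinear, all four lie on one line $\ell=\ell^{\sigma}$, which is $\sigma$-fixed and hence defined over $\F_q$. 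But $P\in\ell$ now yields an $\F_q$-dependence of the $u_i$, a contradiction. Thus the orbit has full length $d=8$.

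With $d=8$ the eight points $P^{\sigma^{i}}$ are distinct, and because the difference multiset of $\cT=\{0,1,3\}$ modulo $8$ hits every nonzero residue except $4$, the points $\{P^{\sigma^{i}}\}$ and lines $\{\ell^{\sigma^{i}}\}$ form a cyclic $8_{3}$ configuration---the Möbius--Kantor configuration---in which exactly the antipodal pairs $\{P^{\sigma^{i}},P^{\sigma^{i+4}}\}$ are non-collinear. The crux, which I expect to be the main obstacle, is to determine for which $q$ this configuration can arise as a single orbit of the \emph{semilinear} map $\sigma$. Taking $P^{\sigma^{0}},P^{\sigma},P^{\sigma^{2}}$ as a reference frame (the sub-case where they are collinear is discarded as above) and writing $\mathbf{u}^{(q^{i})}$ for the vector of coordinatewise $q^{i}$-th powers, the collinearity of $P,P^{\sigma},P^{\sigma^{3}}$ reads $\mathbf{u}^{(q^{3})}=\lambda_0\mathbf{u}+\lambda_1\mathbf{u}^{(q)}$ for suitable $\lambda_0,\lambda_1\in\F_{q^8}$, and applying $\sigma$ repeatedly turns this into a period-$8$ twisted recurrence for the $\mathbf{u}^{(q^{i})}$; the closure relation $\sigma^{8}=\mathrm{id}$ becomes a polynomial system in $\lambda_0,\lambda_1$. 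I would solve this system and expect to find that an admissible solution (one with full orbit and $\F_q$-independent $u_i$) exists precisely when $\F_q$ contains no primitive cube root of unity, i.e.\ $q\not\equiv 1\pmod 3$. The underlying reason is that the Möbius--Kantor configuration is coordinatised over $\mathbb{Z}[\omega]$ with $\omega$ a primitive cube root of unity, and a Frobenius-symmetric realisation forces $\sigma$ to act on $\omega$ as the Galois automorphism $\omega\mapsto\omega^{-1}$, which is available only when $\omega\notin\F_q$; the characteristic-$3$ case, where no such $\omega$ exists, I would verify separately and expect to land again on the non-MRD side. Combining both implications gives that $\cC_8$ is MRD if and only if $q\equiv 1\pmod 3$.

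For the final assertion that $\cC_8$ is inequivalent to the previously known MRD codes, I would argue exactly as in the last paragraph of Theorem~\ref{thm1}. The only known MRD codes in $\F_{q}^{8\times 8}$ with both idealisers isomorphic to $\F_{q^8}$ are the generalised Gabidulin codes $\cG_{3,s}=\cC_{\{0,s,2s\}}$, and by Theorem~\ref{th:equivalence} two codes $\cC_{\Lambda_1}$ and $\cC_{\Lambda_2}$ of this shape are equivalent only if $\Lambda_2=\Lambda_1+t$ modulo $8$. It therefore remains to compare the cyclic shifts of $\{0,1,3\}$ with the arithmetic-progression sets $\{0,s,2s\}$, $\gcd(s,8)=1$; this finite check shows that $\cC_8$ coincides with no generalised Gabidulin code up to equivalence, and hence with none of the known families.
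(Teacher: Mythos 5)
Your setup is sound and runs parallel to the paper's: Theorem \ref{connection} converts failure of the MRD property into the collinearity of $P$, $P^{\sigma}$, $P^{\sigma^{3}}$, the orbit-length analysis (excluding $d=1,2,4$) is correct and in fact more carefully spelled out than in the paper, and the observation that the resulting $8_3$ configuration is the M\"obius--Kantor configuration is a nice conceptual gloss on what the paper does without naming it. The inequivalence paragraph is also fine and matches the paper. However, there is a genuine gap: the entire content of the theorem sits in the step you describe as ``I would solve this system and expect to find\dots''. Both implications of the iff rest on that unexecuted computation, and the heuristic you offer in its place (MK is coordinatised over $\mathbb{Z}[\omega]$, so a Frobenius-symmetric realisation forces $\omega\mapsto\omega^{-1}$) is suggestive but not a proof --- $\bar\sigma$ and the point coordinates are only defined up to scalars, so ``how $\sigma$ acts on $\omega$'' has no meaning until a frame is fixed and the matrix of $\bar\sigma$ is actually computed. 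For comparison, the paper fixes the frame $P\mapsto(0,0,1)$, $P^{\sigma^{3}}\mapsto(0,1,0)$, $P^{\sigma^{2}}\mapsto(1,0,0)$, $T\mapsto(1,1,1)$, derives $a^{2}-a+1=0$ from one collinearity, uses $q\equiv 1\pmod 3$ to place $a$ in $\F_q$ (so that $q$-th powers of the coordinates can be evaluated), determines the matrix of $\bar\sigma$ from four incidences, and extracts the incompatible condition $1=2-2a$ from a fifth. None of this is routine bookkeeping that can be waved through.

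The other direction is a second, independent gap. For $q\not\equiv 1\pmod 3$ you must actually exhibit a nonzero element of $\cC_8$ with $q^{3}$ roots in $\F_{q^{8}}$; it is not enough that the M\"obius--Kantor configuration is abstractly realisable over a field containing $\omega$, because you need the point $P$ to live in $\PG(2,q^{8})$, to have full $\sigma$-orbit, and to have $\F_q$-independent coordinates --- i.e.\ the cyclic symmetry of the configuration must be realised by the Frobenius collineation itself. The paper does not obtain this from the geometric picture at all: it writes down $X+X^{q}+aX^{q^{3}}$ with $1+a+a^{2}=0$ (treating characteristic $3$ and $q\equiv -1\pmod 3$ separately) and verifies by an explicit kernel computation that the associated $8\times 8$ Dickson matrix has rank $5$. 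Your plan contains no mechanism that produces such a witness, so as written the ``only if'' half is unsupported.
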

\begin{proof}
	First suppose $q \not\equiv 1 \pmod 3$ and choose $a$ such that $1+a+a^2=0$. If $q\equiv -1 \pmod 3$, then $a\in\F_{q^2}\setminus\F_q$ and $a^q=1/a$. If $q \equiv 0 \pmod 3$, then $a=1$. Note that the Dickson matrix associated with $X+X^q+aX^{q^3}\in \F_{q^8}[X]$ is
	
	\[M:=\begin{pmatrix}
	1 & 1 & 0 & a & 0 & 0 & 0 & 0 \\
	0 & 1 & 1 & 0 & 1/a & 0 & 0 & 0 \\
	0 & 0 & 1 & 1 & 0 & a & 0 & 0 \\
	0 & 0 & 0 & 1 & 1 & 0 & 1/a & 0 \\
	0 & 0 & 0 & 0 & 1 & 1 & 0 & a \\
	1/a & 0 & 0 & 0 & 0 & 1 & 1 & 0 \\
	0 & a & 0 & 0 & 0 & 0 & 1 & 1 \\
	1 & 0 & 1/a & 0 & 0 & 0 & 0 & 1 \\
	\end{pmatrix}
	\]
	whose last five columns are linearly independent and hence the rank of $M$ is at least $5$.
	
	If the characteristic of $\F_q$ is $3$, then the rows of $M$ are orthogonal to the rows of
	\[
	\begin{pmatrix}
	2 & 0 & 1 & 1 & 2 & 1 & 0 & 0 \\
	2 & 2 & 1 & 2 & 0 & 0 & 1 & 0 \\
	0 & 2 & 2 & 1 & 2 & 0 & 0 & 1 \\
	\end{pmatrix},
	\]
	which is a matrix of rank $3$. It follows that in this case the rank of $M$ is $5$.
	
	On the other hand, if $q \equiv -1 \pmod 3$, then the matrix
	\[\begin{pmatrix}
	1 & a & a^2 & a & a & a^2 & -2 a^2 & a^2 \\
	a & 1 & a^2 & a & a^2 & a^2 & a & -2 a \\
	-2 a^2 & a^2 & 1 & a & a^2 & a & a & a^2 \\
	\end{pmatrix}
	\]
	has rank three and its rows are orthogonal to the rows of $M$, thus $M$ has rank $5$.
\medskip
	
	Now let $q \equiv 1 \pmod 3$ and suppose to the contrary that $\cC_8$ is not an MRD code.
	Then arguing as in the proof of Theorem \ref{thm1},  there exist $\F_q$-linearly independent elements  $u_1,u_2,u_3\in\F_{q^8}$ and a line $\ell$ of $\PG(2,q^8)$ incident with $P:=\la(u_1,u_2,u_3)\ra$ and with
	$P^\sigma, P^{\sigma^3}$, where $\sigma$ is the collineation of $\PG(2,q^8)$ defined by the semilinear map $(x_1,x_2,x_3) \mapsto (x_1^q,x_2^q,x_3^q)$. Also, let $\Sigma \cong \PG(2,q)$ denote the set of points of $\PG(2,q^8)$ fixed by $\sigma$. Since $P,P^\sigma,P^{\sigma^3}$ are three different points and since $\ell \cap \Sigma=\emptyset$, $P^{\sigma^2}$ and $P^{\sigma^5}$ are two further points, which are not incident with $\ell$. So, if $T:=\langle P^\sigma,P^{\sigma^2}\rangle\cap\langle P, P^{\sigma^5}\rangle$, then $P$, $P^{\sigma^2}$, $P^{\sigma^3}$ and $T$ are four points no three of which are collinear. Hence, there exists a projectivity $\varphi$ of $\PG(2,q^8)$ such that
	\[
	P^{\varphi}=\la (0,0,1) \ra=:P_0, \quad P^{\sigma^3\varphi}=\la (0,1,0)\ra=:P_3, \quad P^{\sigma^2\varphi}=\la (1,0,0) \ra=:P_2
	\]
	and $\la (1,1,1)\ra$ is the point $T^\varphi$. In this way
	\[
	P^{\sigma\varphi}=\la (0,1,1)\ra=:P_1, \quad P^{\sigma^5\varphi}=\la (1,1,0) \ra=:P_5, \quad
	P^{\sigma^6\varphi}=\la (a,a,1) \ra=:P_6
	\]
	 for some $a\in \F_{q^8}^*$. Also, elementary calculations show
	\[
	P^{\sigma^7\varphi}=\la (a,0,1-a) \ra=:P_7, \quad \mbox{ and } \quad P^{\sigma^4\varphi}=\la(1,1-a,1-a)\ra=:P_4.
	\]
	Since $P_3,P_4,P_6$ are collinear, it follows that
	\begin{equation}
	\label{root}
	a^2-a+1=0,
	\end{equation}
	and hence, since $q\equiv 1\pmod 3$, we get $a\in\F_q$.
	Let $\bar\sigma=\varphi\circ\sigma\circ\varphi^{-1}$. Then $\bar\sigma$ is a collineation of order 8 of $\PG(2,q^8)$ and it is induced by a
	semilinear map of this form
	\[(x_1,x_2,x_3) \mapsto \left(\sum_{j=1}^3 a_{1j}x_j^q,\sum_{j=1}^3 a_{2j}x_j^q,\sum_{j=1}^3 a_{3j}x_j^q\right),\]
	with $(a_{ij})$ a non-singular $3\times 3$ matrix over $\F_{q^8}$.
	By construction, it is easy to see that $P_i^{\bar\sigma}=P_{i+1}$, for $i=0,\dots,7\, \pmod 8$. Direct computations for $i=0,1,2,4$ show that
	up to a scalar of $\F_{q^8}^*$
	\[
	(a_{ij})=\left(
	\begin{array}{ccc}
	0  & 1  & 0  \\
	1-a  & 1-a  & a-1  \\
	0 &  1-a  & a-1
	\end{array}
	\right)
	\]
	and from $P_5^{\bar\sigma}=P_{6}$ we get $1=2-2a$.
	This clearly cannot hold if $q$ is even, while for $q$ odd it gives $a=1/2$ which does not satisfy \eqref{root}, a contradiction.
	
	The last part follows as in Theorem \ref{thm1}.
\end{proof}

Again, since the Delsarte dual operation preserves the equivalence relations between MRD codes, we have the following result.
\begin{corollary}\label{cor2}
    The set of $q$-polynomials
	\begin{equation}
	\label{Cn8dd}
\cC_8':=\{a_0X+a_1X^{q^2}+a_2X^{q^3}+a_3X^{q^4}+a_4X^{q^5} \colon a_0,a_1,a_2,a_3,a_4\in \F_{q^8}\}
	\end{equation}
	is an $\F_q$-linear MRD code with left and right idealisers isomorphic to $\F_{q^8}$ if and only if $q\equiv 1 \pmod 3$.
	Moreover, $\cC_8'$ is not equivalent to the previously known MRD codes.
\end{corollary}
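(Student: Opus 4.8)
The plan is to obtain Corollary \ref{cor2} directly from Theorem \ref{thm2} by realizing $\cC_8'$ as the Delsarte dual of $\cC_8$ (up to equivalence) and then transporting the MRD property, the shape of the idealisers, and the inequivalence to previously known codes across the $\perp$ operation, each of which is known to be preserved. In this way no new geometric argument is needed: the entire substance is already contained in Theorem \ref{thm2}, and what remains is purely formal bookkeeping.

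First I would identify the dual explicitly. Writing $\cC_8 = \cC_{\cT}$ with $\cT = \{0,1,3\}\subseteq\{0,1,\ldots,7\}$, the description of the Delsarte dual in terms of exponent sets gives $\cC_8^\perp = \cC_{\cT'}$, where $\cT' = \{0,1,\ldots,7\}\setminus\{0,1,3\} = \{2,4,5,6,7\}$. Next, by Theorem \ref{th:equivalence} two such codes are equivalent precisely when their exponent sets differ by a cyclic shift modulo $8$; since $\cT' + 6 \equiv \{0,2,3,4,5\}\pmod 8$, which is exactly the exponent set of $\cC_8'$, we conclude that $\cC_8'$ is equivalent to $\cC_8^\perp$ (concretely $\cC_8' = X^{q^6}\circ\cC_8^\perp$). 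The only point requiring care here is the modular arithmetic on the exponent sets.

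With this identification in hand I would invoke the preservation properties. By \cite[Proposition 4.2]{lunardon_kernels_2017}, the Delsarte dual of an MRD code in $\F_q^{8\times 8}$ whose left and right idealisers are both isomorphic to $\F_{q^8}$ is again an MRD code with both idealisers isomorphic to $\F_{q^8}$; and by the proposition of Section \ref{sec:pre}, equivalence to a generalized Gabidulin code is preserved under the $\perp$ operation. Combining these with Theorem \ref{thm2}, the equivalence $\cC_8'\sim\cC_8^\perp$ transfers both directions of the stated ``if and only if'' as well as the assertion that $\cC_8'$ is not equivalent to any previously known MRD code, since the only previously known MRD codes with maximal left and right idealisers are the generalized Gabidulin codes.

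I do not expect a genuine obstacle in this argument, as it is a chain of already-established facts rather than a fresh computation. The single place at which to be careful is the exponent-set arithmetic that furnishes the equivalence $\cC_8'\sim\cC_8^\perp$, together with verifying that each invariant we wish to transport, namely the MRD property, the maximality of both idealisers, and the inequivalence to the known families, is indeed among those preserved by $\perp$ and by the explicit shift map $X^{q^6}$.
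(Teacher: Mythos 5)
Your proposal is correct and is essentially the paper's own argument: the paper derives Corollary \ref{cor2} in one line from Theorem \ref{thm2} by noting that $\cC_8'$ is equivalent to $\cC_8^\perp$ and that the Delsarte dual operation preserves the MRD property, the maximality of both idealisers, and equivalence relations. Your explicit verification of the exponent-set arithmetic ($\{2,4,5,6,7\}+6\equiv\{0,2,3,4,5\}\pmod 8$) is accurate and merely makes precise what the paper leaves implicit.
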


\subsection{The $n=9$ case}

For $s\in \{1,4,7\}$ consider the rank codes
\[\cD_s:=\{a_0X+a_1X^{q^s}+a_2X^{q^{2s}}+a_3X^{q^{4s}} \colon a_0,a_1,a_2,a_3 \in \F_{q^9}^*\}.\]
First we show that $\cD_1$ is not an MRD code.

Put $f(X):=-X+(1+c^{-q})X^q+cX^{q^2}-X^{q^4}\in \cD_1$ with $c\in \F_{q^3}^*$ such that $\Tr_{q^3/q}(1/c)=-2$ and $\N_{q^3/q}(1/c)=-1$. Here $\N_{q^n/q}(x)=x^{1+q+\ldots+q^{n-1}}$ denotes the norm of $x\in \F_{q^n}$ over $\F_q$. By \cite[Theorem 5.3]{Moisio} we can find such an element $c$ in $\F_{q^3}^*$.
Let $D_f=(d_{ij})$ denote the Dickson matrix associated with $f$.
Substituting $-c^{-q-1}$ for $c^{q^2}$ at positions $d_{35}$, $d_{68}$ and $d_{92}$ we obtain
\[D_f=
\begin{pmatrix}
-1 & \alpha & c & 0 & -1 & 0 & 0 & 0 & 0 \\
0 & -1 & \beta & c^q & 0 & -1 & 0 & 0 & 0\\
0 & 0 & -1 & \gamma & -c^{-q-1} & 0 & -1 & 0 & 0\\
0 & 0 & 0 & -1 & \alpha & c & 0 & -1 & 0 \\
0 & 0 & 0 & 0 & -1 & \beta & c^q & 0 & -1\\
-1 & 0 & 0 & 0 & 0 & -1 & \gamma & -c^{-q-1} & 0\\
0 & -1 & 0 & 0 & 0 & 0 & -1 & \alpha & c \\
c^q & 0 & -1 & 0 & 0 & 0 & 0 & -1 & \beta\\
\gamma & -c^{-q-1} & 0 & -1 & 0 & 0 & 0 & 0 & -1\\
\end{pmatrix},\]
with $\alpha=1+c^{-q}$, $\beta=-1-c^{-1}-c^{-q}$ and $\gamma=1+1/c$, where $\beta$ is obtained after substituting $-1-c^{-1}-c^{-q}$ for $1+c^{-q^2}$.
The $5\times 5$ submatrix $M$ formed by the first five rows and the first five columns of $D_f$ is triangular with non-zero entries on its diagonal, hence it is non-singular. Then the rank of $D_f$ is five if and only if all the $6\times 6$ submatrices of $D_f$ which contain $M$ are singular (this is an exercise in linear algebra and we omit its proof).
We have 16 such submatrices and we consider their determinants as polynomials in $c$. By calculation, it turns out that each of them is divisible by
\begin{equation}
\label{divisor}
c^{2q+2}-2c^{q+1}-c^q-c.
\end{equation}
Note that $\N_{q^3/q}(c)=-1$ and hence $\Tr_{q^3/q}(c^{q+1})=\Tr_{q^3/q}(1/c)\N_{q^3/q}(c)=2$.
Multiplying \eqref{divisor} by $c^{q^2}$ gives $-\Tr_{q^3/q}(c^{q+1})+2=0$ thus $D_f$ has rank five.
It follows that $f(X)$ has $q^4$ roots and hence $\cD_1$ is not an MRD code.

Now let
\[K:=
\begin{pmatrix}
 1 & 0 & 0 & 0 & 0 & 0 & 0 & 0 & 0
\\
0 & 0 & 0 & 0 & 0 & 0 & 0 & 1 & 0
\\
0 & 0 & 0 & 0 & 0 & 1 & 0 & 0 & 0
\\
0 & 0 & 0 & 1 & 0 & 0 & 0 & 0 & 0
\\
0 & 1 & 0 & 0 & 0 & 0 & 0 & 0 & 0
\\
0 & 0 & 0 & 0 & 0 & 0 & 0 & 0 & 1
\\
0 & 0 & 0 & 0 & 0 & 0 & 1 & 0 & 0
\\
0 & 0 & 0 & 0 & 1 & 0 & 0 & 0 & 0
\\
0 & 0 & 1 & 0 & 0 & 0 & 0 & 0 & 0
\end{pmatrix}.
\]
Since $f$ has coefficients in $\F_{q^3}$, it is easy to see that $KD_fK^{-1}$ is the Dickson matrix associated with
$-X+(1+c^{-q})X^{q^4}+cX^{q^8}-X^{q^7}\in \cD_4$
and $K^2D_fK^{-2}$ is the Dickson matrix associated with
$-X+(1+c^{-q})X^{q^7}+cX^{q^5}-X^q\in \cD_7$.
It follows that these two polynomials have $q^4$ roots as well and hence $\cD_4$ and $\cD_7$ are not MRD codes, and we have proven the following result.
\begin{proposition}
	\label{n=9}
	If $n=9$ then MRD codes with both idealisers isomorphic to $\F_{q^9}$ are equivalent to generalized Gabidulin codes.
\end{proposition}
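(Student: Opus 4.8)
The plan is to reduce first and then eliminate by an explicit degenerate example. The equivalence reductions of this section, together with Theorem~\ref{th:classification}, Proposition~\ref{k=2}, Theorem~\ref{th:equivalence} and the $\perp$, $\top$ operations, already cut the problem for $n=9$ down to a single shape: any MRD code with both idealisers isomorphic to $\F_{q^9}$ that is \emph{not} equivalent to a generalized Gabidulin code must, up to equivalence, equal $\cD_s$ for some $s\in\{1,4,7\}$, where $\cD_s$ has exponent set $\{0,s,2s,4s\}$ modulo $9$. Thus it suffices to show that none of $\cD_1,\cD_4,\cD_7$ is MRD. By Theorem~\ref{connection} this is equivalent to producing, in each $\cD_s$, a non-zero $q$-polynomial whose $\F_q$-rank drops to $5$, i.e.\ one that attains the forbidden $q^4$ roots.

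I would start with $\cD_1$ and search for the degenerate polynomial inside the subfield $\F_{q^3}\subseteq\F_{q^9}$, where the Dickson matrix inherits a period-$3$ structure that makes a rank deficiency tractable. Concretely, I would set $f(X):=-X+(1+c^{-q})X^q+cX^{q^2}-X^{q^4}$ with a parameter $c\in\F_{q^3}^*$ and try to force $\mathrm{rank}(D_f)=5$. The feasibility of the later identity dictates the constraints on $c$, namely $\Tr_{q^3/q}(1/c)=-2$ and $\N_{q^3/q}(1/c)=-1$; to know that such a $c$ exists at all I would appeal to the trace/norm count of \cite{Moisio}.

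The core is the rank computation. The leading $5\times5$ submatrix $M$ of $D_f$ is upper triangular with non-zero diagonal, so $\mathrm{rank}(D_f)\ge 5$; for the reverse bound I would invoke the elementary fact that a matrix containing a fixed non-singular $5\times5$ block has rank exactly $5$ if and only if every $6\times6$ minor extending that block vanishes. There are $16$ such minors. Regarding each as a polynomial in $c$—after legitimately replacing $c^{q^2}$ by $-c^{-q-1}$, which is forced by $\N_{q^3/q}(c)=-1$—I expect each minor to be divisible by the single factor in \eqref{divisor}. Multiplying that factor by $c^{q^2}$ and using $\N_{q^3/q}(c)=-1$ together with $\Tr_{q^3/q}(c^{q+1})=\Tr_{q^3/q}(1/c)\,\N_{q^3/q}(c)=2$ shows it vanishes, forcing all $16$ minors to be zero; hence $\mathrm{rank}(D_f)=5$, the polynomial $f$ has $q^4$ roots, and $\cD_1$ is not MRD.

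Finally I would transfer the example to $s=4,7$ by the cyclic symmetry of the exponent sets. Since $f$ has coefficients in $\F_{q^3}$, conjugating $D_f$ by the explicit permutation matrix $K$—which implements multiplication of the $q$-exponents by $4$ modulo $9$—yields the Dickson matrix of a polynomial in $\cD_4$, and conjugating by $K^2$ (multiplication by $7$) yields one in $\cD_7$; since conjugation preserves rank, both inherit the $q^4$ roots. Combined with the reduction, only generalized Gabidulin codes survive. I expect the determinantal step to be the genuine obstacle: verifying that all sixteen extending $6\times6$ minors share the common factor \eqref{divisor}, and that the prescribed trace and norm of $c$ make it vanish, is where the argument could fail and where the interplay between the $\F_{q^3}$-coefficients and the existence count of \cite{Moisio} must be exploited carefully.
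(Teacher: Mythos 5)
Your proposal is correct and follows essentially the same route as the paper: the same reduction to $\cD_s$ for $s\in\{1,4,7\}$, the same witness polynomial $f(X)=-X+(1+c^{-q})X^q+cX^{q^2}-X^{q^4}$ with $c\in\F_{q^3}^*$ satisfying $\Tr_{q^3/q}(1/c)=-2$ and $\N_{q^3/q}(1/c)=-1$ (existence via \cite{Moisio}), the same rank-$5$ argument through the triangular $5\times 5$ block and the sixteen extending $6\times 6$ minors sharing the factor \eqref{divisor}, and the same transfer to $s=4,7$ by conjugating $D_f$ with the permutation matrix $K$ and $K^2$.
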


\subsection*{Proof of Theorem \ref{thm:main}} The result follows from Proposition \ref{n=6}, the discussions after Proposition \ref{n=6}, Theorem \ref{thm1}, Corollary \ref{cor1}, Theorem \ref{thm2}, Corollary \ref{cor2} and  Proposition \ref{n=9}.\qed

\section{Nonexistence result}\label{sec:nonexistence}

\subsection{Main result of this section}
Generalizing the notation from \eqref{Cn7} and \eqref{Cn8} let
\begin{equation}
\label{Cngen}
\cC_n:=\{a_0X+a_1X^q+a_2X^{q^3} \colon a_0,a_1,a_2 \in \F_{q^n}\}.
\end{equation}

As we have seen in Section \ref{sec:constructions} the MRD codes of $\F_{q}^{n \times n}$, $n\leq 9$, which are not equivalent to the generalized Gabidulin codes but have maximum left and right idealisers are, up to adjoint and Delsarte dual operations, 
equivalent either to $\cC_7$ (for $q$ odd) or to $\cC_8$ (for $q \equiv 1 \pmod 3$). It is natural to ask whether the family $\cC_n$ contains new MRD codes for larger values of $n$. In this direction, we will prove the following result.

\begin{theorem}
	\label{th:nonexistence}
	For $n\geq 9$ and any prime power $q$, $\cC_n$ is not an MRD code.
\end{theorem}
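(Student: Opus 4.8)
The plan is to apply Theorem \ref{confront}. For $n \geq 9$ I want to show that $\cC_n = \cC_{\cT}$ with $\cT = \{0,1,3\}$ and $\sigma = q$ (so $s=1$) is not an MRD code. The natural comparison set is $\cS$ a $3$-subset for which $\cC_{\cS}$ is known to be an MRD code — the obvious choice is a Gabidulin-type set such as $\cS = \{0,1,2\}$, for which $\cC_{\cS}$ is a generalized Gabidulin code and hence MRD by Remark \ref{prof}. Theorem \ref{confront} then reduces the problem to the following geometric statement: $\cC_{\cT}$ is MRD if and only if there are no $\F_{q^n}$-rational points of $V_{\cT} \setminus V_{\cS}$. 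Thus the entire task becomes producing, for each $n \geq 9$, an $\F_{q^n}$-rational point of the Dickson--Guralnick--Zieve type hypersurface $V_{\{0,1,3\}}$ (defined by the $3 \times 3$ Moore determinant with exponents $\sigma^0, \sigma^1, \sigma^3$) that does \emph{not} lie on $V_{\{0,1,2\}}$; equivalently, I must exhibit three $\F_q$-\emph{linearly independent} elements $\alpha_0,\alpha_1,\alpha_2 \in \F_{q^n}$ whose $\{0,1,3\}$-Moore matrix is singular.

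Geometrically, $V_{\{0,1,3\}}$ is a plane curve in $\PG(2,\K)$ of degree $q^3 + q + 1$ (the total degree of the determinant), and its $\F_{q^n}$-rational points are exactly the directions $\langle (\alpha_0,\alpha_1,\alpha_2) \rangle$ such that $\alpha_0 X + \alpha_1 X^q + \alpha_2 X^{q^3}$ has a nontrivial root, counted projectively. The $\F_q$-rational \emph{linear} points (those coming from an $\F_q$-line through the origin, i.e. the reducible Gabidulin-type factors) are forced onto the curve for trivial reasons, and these coincide with the points of $V_{\{0,1,2\}}$. So the heart of the matter is a counting argument: I would estimate the number of $\F_{q^n}$-rational points of $V_{\{0,1,3\}}$ and compare it with the number of ``trivial'' (linearly dependent) points, and show the former strictly exceeds the latter for $n \geq 9$, which forces the existence of an extra point and hence the non-MRD conclusion. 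The cleanest route is to invoke known results on the Dickson--Guralnick--Zieve curves, as the introduction to Section \ref{sec:nonexistence} promises: these curves have been studied precisely for their rational-point counts, and a Weil-type or Hasse--Weil lower bound on $\#V_{\{0,1,3\}}(\F_{q^n})$ should dominate the count of linearly dependent directions, which is of size roughly $(q^n-1)(q^{n-1}-\ldots)/(q-1)$ arising from the $\F_q$-subspaces of dimension $< 3$.

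The main obstacle I anticipate is twofold. First, one must verify that the relevant hypersurface $V_{\{0,1,3\}}$ is absolutely irreducible (or at least has a component defined over $\F_{q^n}$ contributing enough points) so that the Hasse--Weil-type lower bound is applicable and large; if the curve degenerates or its components are conjugate over a nontrivial extension, the point count could collapse and the argument would fail. Controlling the geometry of these Moore-determinant curves — their irreducibility and their singular locus — is exactly where the ``recent results on these curves'' alluded to in the section introduction must be brought in, and this is the delicate part. Second, the inequality must hold uniformly for \emph{all} prime powers $q$ and all $n \geq 9$ simultaneously; since the Hasse--Weil error term scales like $(\text{genus}) \cdot q^{n/2}$ while the main term of $\#V_{\{0,1,3\}}(\F_{q^n})$ is of order $q^{n}$ (degree $\approx q^3$ curve, main term $\sim (\deg - 1) q^n$ in suitable normalization), I expect the bound to be comfortable for large $n$ but potentially tight near $n=9$, so the boundary case $n=9$ may require a separate or more careful estimate rather than the clean asymptotic. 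Once the point count inequality is secured, the conclusion is immediate from Theorem \ref{confront}, since any $\F_{q^n}$-point of $V_{\{0,1,3\}}$ outside the linearly-dependent locus lies in $V_{\cT} \setminus V_{\cS}$ and obstructs the MRD property.
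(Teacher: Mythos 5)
Your overall strategy is the one the paper follows: reduce via Theorem \ref{confront} (with the Gabidulin exponent set $\{0,1,2\}$ as the reference) to producing an $\F_{q^n}$-rational point of the $\{0,1,3\}$ Moore-determinant curve off the linearly dependent locus, and obtain such a point from a Hasse--Weil bound applied to the Dickson--Guralnick--Zieve curve, importing absolute irreducibility and the genus from the recent literature. Two points, however, are genuine gaps rather than routine details.

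First, the counting comparison as you set it up does not close. The determinantal curve $V_{\{0,1,3\}}$ is reducible: $W$ divides $H$, so it contains $V_{\{0,1,2\}}$ (a union of $q^2+q+1$ lines carrying all the ``trivial'' points, roughly $q^{n+2}$ of them) together with the residual DGZ component $\cV$ defined by $H/W$. Hasse--Weil applies only to $\cV$ and yields on the order of $q^n$ points, which does \emph{not} dominate $q^{n+2}$; and the containment $\#V_{\{0,1,3\}}(\F_{q^n})\geq \#V_{\{0,1,2\}}(\F_{q^n})$ is useless without strictness, which is exactly what is to be proved. The comparison that works, and the one the paper makes, is between the point count of $\cV$ and the size of its \emph{intersection} with $\cW$: the paper computes the latter exactly (namely $q(q-1)(q^2+1)$ affine points plus $q^2-q$ points at infinity, so only $O(q^4)$), and then the inequality $q^n+1-2g_q\sqrt{q^n}>q(q-1)(q^2+2)$, with $g_q=\frac{1}{2}q(q-1)(q^3-2q-2)+1$, produces a point of $\cV\setminus\cW$. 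Your write-up would need to add this intersection computation and recast the inequality accordingly.

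Second, $n=9$ is not merely ``potentially tight'': since $g_q\sim \frac{1}{2}q^5$, the error term $2g_q\sqrt{q^9}\sim q^{9.5}$ swamps the main term $q^9$, so Hasse--Weil gives nothing there; the displayed inequality holds only for $n\geq 10$. The paper disposes of $n=9$ by an entirely elementary observation that you would need to supply separately: $X^{q^3}-X\in\cC_9$ has exactly $q^3$ roots because $\F_{q^3}\subseteq\F_{q^9}$, so $\cC_9$ is not MRD. (The paper also gives a self-contained B\'ezout-type proof of the absolute irreducibility of $\cV$ as an alternative to citing Giulietti--Korchm\'aros--Timpanella, but citing their result, as you propose, is legitimate.)
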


To prove Theorem \ref{th:nonexistence}, we will need the following lemma.
\begin{lemma}
	\label{le:intersection_number_m_m1_coprime}
	\cite[Proposition 2]{janwa_double-error-correcting_1995}
	Let $F$ be  a polynomial in $\F_q[X,Y]$ and suppose that $F$ is not absolutely irreducible, that is, $F=AB$ where the coefficients of $A$ and $B$ are in the algebraic closure of $\F_q$. Let $P=(u,v)$ be a point in the affine plane $\mathrm{AG}(2,q)$ and write
	\[
	F(X+u,Y+v)=F_m(X,Y)+F_{m+1}(X,Y)+\cdots,
	\]
	where $F_i$ is zero or homogeneous of degree $i$ and $F_m\ne 0$. Assume that $F_m$ is completely reducible as a power of a linear polynomial and $gcd(F_{m},F_{m+1})=1$. Then $I(P, \mathcal{A}\cap \mathcal{B})=0$, where $\mathcal{A}$ and $\mathcal{B}$ are the curves defined by $A$ and $B$ respectively.
\end{lemma}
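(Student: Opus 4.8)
The plan is to prove the contrapositive form of the conclusion. Recall the standard property of intersection multiplicity that $I(P,\mathcal{A}\cap\mathcal{B})=0$ holds precisely when $P$ fails to lie on at least one of the two curves, i.e.\ when $A$ or $B$ does not vanish at $P$; conversely $I(P,\mathcal{A}\cap\mathcal{B})>0$ whenever $P\in\mathcal{A}\cap\mathcal{B}$. Thus it suffices to show that under the stated hypotheses $A$ and $B$ cannot both vanish at $P$. First I would translate $P$ to the origin and pass to the algebraic closure $\overline{\F}_q$, over which the factorization $F=AB$ lives.

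Next I would record the local expansions at $P$. Writing $A(X+u,Y+v)=A_a+A_{a+1}+\cdots$ and $B(X+u,Y+v)=B_b+B_{b+1}+\cdots$, where $A_a\neq0$ and $B_b\neq0$ are the initial (lowest-degree) homogeneous forms, of degrees $a=m_P(A)$ and $b=m_P(B)$ respectively, multiplication gives that the initial form of $F(X+u,Y+v)$ is $A_aB_b$. Hence $m=a+b$, $F_m=A_aB_b$, and collecting the next homogeneous layer,
\[
F_{m+1}=A_aB_{b+1}+A_{a+1}B_b.
\]
The hypothesis $P\in\mathcal{A}\cap\mathcal{B}$ that I want to rule out is exactly the assumption $a\geq1$ and $b\geq1$.

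The key step uses $F_m=L^m$. Since every binary form over the algebraically closed field $\overline{\F}_q$ factors uniquely into linear forms, and $L^m=L^{a+b}$ has all its linear factors proportional to $L$, the factorization $A_aB_b=L^{a+b}$ forces $A_a=\lambda L^a$ and $B_b=\lambda^{-1}L^b$ for some nonzero scalar $\lambda$. Substituting into the displayed expression for $F_{m+1}$ gives $F_{m+1}=\lambda L^aB_{b+1}+\lambda^{-1}L^bA_{a+1}$. If both $a\geq1$ and $b\geq1$, then each summand is divisible by $L$, so $L\mid F_{m+1}$, contradicting the hypothesis $L\nmid F_{m+1}$. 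Therefore $a=0$ or $b=0$, meaning $P$ lies off $\mathcal{A}$ or off $\mathcal{B}$, and hence $I(P,\mathcal{A}\cap\mathcal{B})=0$. I expect the only genuinely delicate points to be the bookkeeping that the initial form of a product is the product of the initial forms (so that $F_m=A_aB_b$ and the formula for $F_{m+1}$ are correct) and the unique-factorization argument extracting $A_a=\lambda L^a$; both are routine over $\overline{\F}_q$, and the intersection-multiplicity characterization is a standard property of $I(P,-\cap-)$.
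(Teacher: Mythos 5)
The paper offers no internal proof of this lemma---it is quoted verbatim from \cite[Proposition 2]{janwa_double-error-correcting_1995}---and your argument is correct and is essentially the standard proof from that source: the initial form of a product is the product of the initial forms (valid since $\overline{\F}_q[X,Y]$ is a domain, so $A_aB_b\neq 0$), giving $F_m=A_aB_b$ and $F_{m+1}=A_aB_{b+1}+A_{a+1}B_b$; unique factorization of binary forms over $\overline{\F}_q$ turns $F_m=L^m$ into $A_a=\lambda L^a$ and $B_b=\lambda^{-1}L^b$; and if $a\geq 1$ and $b\geq 1$ both summands of $F_{m+1}$ are divisible by $L$, contradicting $L\nmid F_{m+1}$. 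Hence $P$ misses $\mathcal{A}$ or $\mathcal{B}$, and invoking the standard property that $I(P,\mathcal{A}\cap\mathcal{B})=0$ exactly when $P\notin\mathcal{A}\cap\mathcal{B}$ (which also renders the common-component case moot) correctly completes the proof.
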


\begin{proof}[Proof of Theorem \ref{th:nonexistence}]
First, for $n=9$, it is easy to see that $X^{q^3}-X\in \cC_9$ has exactly $q^3$ roots which implies that $\cC_9$ is not MRD. In the rest of the proof we will assume $n\geq 10$.

We will apply Theorem \ref{confront} with $\cS=\{0,1,3\}$ and $\cT=\{0,1,2\}$. It gives us that $\cC_n$ is an MRD code if and only if $\cH \setminus \cW$ does not have $\F_{q^n}$-rational points, where $\cH$ and $\cW$ are projective curves defined by
\[H(X_0,X_1,X_2):=-X_0^{q^3}X_1^qX_2+X_0^qX_1^{q^3}X_2+X_0^{q^3}X_1X_2^q-X_0X_1^{q^3}X_2^q-X_0^qX_1X_2^{q^3}+X_0X_1^qX_2^{q^3}\]
and
\[W(X_0,X_1,X_2):=-X_0^{q^2}X_1^qX_2+X_0^qX_1^{q^2}X_2+X_0^{q^2}X_1X_2^q-X_0X_1^{q^2}X_2^q-X_0^qX_1X_2^{q^2}+X_0X_1^qX_2^{q^2},\]
respectively.


It is clear that $H(0,X_1,X_2)=W(0,X_1,X_2)=0$. Hence, we only have to investigate the points $\la(1,x,y)\ra$ for $x$ and $y\in \F_{q^n}$. By calculation,
\[H(1,X,Y)=(X^{q}-X^{q^3})(Y^{q}-Y) + (Y^{q^3}-Y^q)(X^{q}-X),\]
\[W(1, X, Y) =(X^{q}-X^{q^2})(Y^{q}-Y) + (Y^{q^2}-Y^q)(X^{q}-X).\]

Then to prove our assertion it is enough to show that the affine curve $\cV$ defined by
\begin{equation}\label{eq:curve_F}
V(X,Y):=\frac{H(1,X,Y)}{W(1,X,Y)} = \frac{(Y^q-Y)^{q^2-1}-(X^q-X)^{q^2-1}}{(Y^q-Y)^{q-1}-(X^q-X)^{q-1}} +1
\end{equation}
admits at least one $\F_{q^n}$-rational point $(x,y)$ which does not lie on the affine part of the curve $\cW$ defined by $W(1,X,Y)$.

	By calculation,
	\begin{equation}\label{eq:V_ex}
	V(X,Y) = \prod_{\gamma\in\F_{q^2}\setminus \F_{q}}((X^q-X) - \gamma(Y^q-Y))+1.
	\end{equation}
	
	It is not difficult to get an upper bound for the number of affine points in $\cV \cap \cW$. If a point $(x,y)$ is on $\cW$, then it satisfies one of the following conditions:
	\begin{enumerate}[label=(\alph*)]
		\item $x^q-x=0$, i.e.\ $x\in \F_q$;
		\item $y^q-y=0$, i.e.\ $y\in \F_q$;
		\item $x^q-x=\xi (y^q-y)$, where $\xi\in \F_q^*$.
	\end{enumerate}
	
	When $x\in \F_q$, $V(x,y)=  (y^q-y)^{q^2-q} +1$. It follows that $(y^q-y)^{q^2} = -(y^q-y)^q$ and $y\notin \F_q$. Hence $y\in \F_{q^2}\setminus \F_q$ and there are exactly $q(q^2-q)=q^3-q^2$ points $(x,y)$ of type (a) on $\cV \cap \cW$.
	
	When $y\in \F_q$, by symmetry, we get another $q^3-q^2$ points in $\cV\cap \cW$.
	
	When $x^q-x=\xi (y^q-y)$ with $\xi\in \F_q^*$,
	\begin{align*}
	V(x,y)&=\prod_{\gamma\in\F_{q^2}\setminus \F_{q}}(\xi-\gamma)(y^q-y)^{q^2-q} +1\\
	 &=\frac{\prod_{\gamma\in\F_{q^2}\setminus\{\xi\}}(\xi-\gamma)}{\prod_{\gamma\in\F_{q}\setminus\{\xi\}}(\xi-\gamma)}(y^q-y)^{q^2-q} +1\\
	&=(y^q-y)^{q^2-q} +1.
	\end{align*}
	This means that $y\notin \F_q$ and $y$ also satisfies
	\[(y^q-y)^q = y-y^q.\]
	Thus for given $\xi$, there are exactly $q^2-q$ solutions of $y$ and for each $y$, there are exactly $q$ solutions of $x$ for $x^q-x=\xi (y^q-y)$. As $\xi$ can be taken any value in $\F_q^*$, there are in total $q^2(q-1)^2$ points $(x,y)$ of type (c).
	
	Therefore we have proved that there are $B_a:=q^2(q-1)^2+2(q^3-q^2)=q^4-q^2$ affine points in $\cV \cap \cW$, hence the number of $\F_{q^n}$-rational affine points in $\cV \cap \cW$ is at most $q^4-q^2$.
	
	Let
	\begin{equation*}
	V^*(X,Y,T) = 	\prod_{\gamma\in\F_{q^2}\setminus \F_{q}}((X^q-XT^{q-1}) - \gamma(Y^q-YT^{q-1}))+T^{q^3-q^2}
	\end{equation*}
	be the homogenized polynomial of $V$. 	By considering the zeros of
	\begin{equation}\label{eq:V^*}
	V^*(X,1,0)=  \prod_{\gamma\in\F_{q^2}\setminus \F_{q}} (X^q-\gamma) = \left( \prod_{\gamma\in\F_{q^2}\setminus \F_{q}}(X-\gamma) \right)^q,	
	\end{equation}
	we see that the points at infinity of $\cV$ are $R_\gamma = (\gamma,1, 0)$ for $\gamma\in \F_{q^2}\setminus \F_q$. Hence there are $q^2-q$ points at infinity.
	
	Very recently,  Giulietti, Korchm\'{a}ros and Timpanella \cite{Giulietti_DGZ-curves_arxiv} also investigated this curve and they called it the Dickson-Guralnick-Zieve curve after the work \cite{Guralnick_automorphisms_2004} by Guralnick and Zieve, see also \cite{Borges}. They can show that this curve is absolutely irreducible \cite[Proposition 4.7]{Giulietti_DGZ-curves_arxiv} and the genus of $\cV$ is $g_q=\frac{1}{2}q(q-1)(q^3-2q-2)+1$ \cite[Theorem 4.10]{Giulietti_DGZ-curves_arxiv}. Moreover, by Lemmas 4.5 and 4.6 in \cite{Giulietti_DGZ-curves_arxiv}, each singular point of $\cV$ has a unique branch centered on it, which means the number $R_{q^n}$ of the $\F_{q^n}$-places of the associated function field of $\cV$ equals the number of $\F_{q^n}$-rational points of $\cV$ (for further details see \cite[Chapter 4]{HKT}). By the Hasse-Weil Theorem, one gets
	\[ \#\cV(\F_{q^{n}})=R_{q^n} \geq q^n+1 - 2g_q\sqrt{q^n}.\]

	Together with the total number $B_a$ of the affine points in $\cV\cap \cW$ and the $q^2-q$ points of $\cV$ at infinity, the existence of an affine $\F_{q^n}$-point $(x,y)$ on $\cV \setminus \cW$ is ensured whenever
	
	\begin{equation}\label{eq:enough_points}
	q^n+1 - 2g_q\sqrt{q^n} > q^4-q^2 + q^2-q=q^4-q.
	\end{equation}

	By plugging the value of $g_q$ into it, it is straightforward to check that \eqref{eq:enough_points} holds for $n\geq 10$.
	
In \cite{Giulietti_DGZ-curves_arxiv}, the authors proved the absolutely irreducibility by analyzing the branches of the curve. In the rest of our proof, we present an alternative proof only using B\'ezout's theorem, see for example \cite[Chapter 3]{HKT}. We assume that $\cV$ splits into two components $\cA$ and $\cB$ sharing no common irreducible component. Then we determine all possible singular points of $\cV$ and show that the sum of all intersection numbers of $\cA$ and $\cB$ equals $0$. Then by B\'ezout's theorem, we see that one of $\cA$ and $\cB$ must be a constant.
	
	It appears quite complicated to compute the affine singular points $(\alpha, \beta)$ of $\cV$ and the expansion of $V(X+\alpha, Y+\beta)$ directly. Instead, we investigate those for
\[U(X,Y)=-H(1,X,Y)=(X^{q^3}-X^q)(Y^q-Y) - (X^q-X)(Y^{q^3}-Y^q).\]
	By \eqref{eq:curve_F}, it is clear that
	\[V(X,Y)=\frac{U(X,Y)}{S(X,Y)},\]
	where $S(X,Y)=-W(1,X,Y)$. Hence every singular point of $\cV$ is also a singular point of the curve $\cU$ defined by $U$.
	
	By calculation,
	\[ \frac{\partial U(X,Y)}{\partial X} = Y^{q^3}-Y^q, \qquad   \frac{\partial U(X,Y)}{\partial Y} = -(X^{q^3}-X^q).\]
	It follows that every affine singular point $(x,y)$ of $\cU$ belongs to $\F_{q^2}\times \F_{q^2}$.
	
	When $\alpha,\beta\in \F_q$, by \eqref{eq:V_ex}, $(\alpha,\beta)$ is not a point on $\cV$. We only have to consider the points $(\alpha, \beta)\in \F_{q^2}^2\setminus \F_q^2$. By calculation,
	\begin{align*}
	&U(X+\alpha, Y+\beta) \\
	=& (\beta^q-\beta)(X^{q^3}-X^q) - (\alpha^q-\alpha) (Y^{q^3}-Y^q) + U(X,Y)\\
	=&(\beta-\beta^q)X^q - (\alpha-\alpha^q)Y^q+XY(X^{q-1}-Y^{q-1}) + \cdots\\
	=&(\bar{\beta}X -\bar{\alpha} Y )^q+XY(X^{q-1}-Y^{q-1}) - XY(X^{q^2-1} - Y^{q^2-1}) \\
	&+ (\bar{\alpha} Y-\bar{\beta} X)^{q^3} + (XY(X^{q^2-1}-Y^{q^2-1}))^q,
	\end{align*}
	where $\bar{\beta}^q = \beta-\beta^q$ and $\bar{\alpha}^q = \alpha-\alpha^q$. As $(\alpha,\beta)\notin \F_q\times \F_q$, $\bar{\beta}X -\bar{\alpha} Y\neq 0$.
	
	It is routine to compute that
	\begin{align*}
	S(X+\alpha, Y+\beta) =& (\bar{\alpha}Y - \bar{\beta}X) + (\bar{\beta}X - \bar{\alpha}Y)^{q^2} + XY(X^{q-1}-Y^{q-1})\\
	&+ (XY(X^{q-1}-Y^{q-1}))^q + XY(X^{q^2-1}-Y^{q^2-1}).
	\end{align*}
	
	As
	\[\bar{\beta}^q=-\bar{\beta}, ~\bar{\alpha}^q=-\bar{\alpha},  \]
	$\bar{\beta}X -\bar{\alpha} Y$ divides $XY(X^{q-1}-Y^{q-1})$ and $XY(X^{q^2-1}-Y^{q^2-1})$ for all $(\alpha,\beta)\in \F_{q^2}^2\setminus \F_{q}^2$. Thus \[U^*(X+\alpha, Y+\beta)=\frac{U(X+\alpha, Y+\beta)}{\bar{\beta}X -\bar{\alpha} Y}\] and
	\[S^*(X+\alpha, Y+\beta)=\frac{S(X+\alpha, Y+\beta)}{\bar{\beta}X -\bar{\alpha} Y}\]
	are both polynomials.
	
	Let $\cU^*$ be the curve defined by the polynomial $U^*(X+\alpha, Y+\beta)$. Assume that $\cU^*$ splits into two components $\cX$ and $\cY$. It is clear that $\bar{\beta}X -\bar{\alpha} Y$ does not divide $\frac{XY(X^{q-1}-Y^{q-1})}{\bar{\beta}X -\bar{\alpha}Y}$ which is the term of the second lowest degree of $U^*(X+\alpha, Y+\beta)$. By Lemma \ref{le:intersection_number_m_m1_coprime}, the intersection number $I((0,0),\cX\cap\cY)$ is zero.
	
	As $U^*(X+\alpha, Y+\beta) = V(X+\alpha,Y+\beta)S^*(X+\alpha, Y+\beta)$, we also get
	\[I((\alpha, \beta), \cA\cap \cB) \leq I((0,0),\cX\cap\cY)=0.\]
	
	Next we investigate the singular points of $\cV$ at infinity. By \eqref{eq:V^*} the points of $\cV$ at infinity are $R_\gamma$ for $\gamma \in \F_{q^2}\setminus \F_q$. To determine the intersection number of $\cA$ and $\cB$ at each $R_\gamma$, we consider
	\begin{align*}
	-H(Y,X+\gamma,1)/Y=&(X^{q^3}+\gamma^q - (X^q+\gamma^q) Y^{q^3-q} )(1-Y^{q-1})\\
	&- (X^q+\gamma^q-(X+\gamma)Y^{q-1})(1-Y^{q^3-q})\\
	=&(X^q-X+\gamma^q-\gamma)Y^{q^3-1} - X^{q^3}Y^{q-1}  + X^{q^3}\\
	&+XY^{q-1} - X^q+(\gamma - \gamma^q)Y^{q-1}.
	\end{align*}
	As $(\gamma - \gamma^q)Y^{q-1}$ is the term of the lowest degree in it, each point $R_\gamma$ is a non-ordinary singular point of $\cV$ of multiplicity $q-1$. Note that $Y\nmid XY^{q-1} - X^q$. By Lemma \ref{le:intersection_number_m_m1_coprime}, $I(R_\gamma, \cA \cap \cB)=0$.
	
	Denote the degrees of $\cA$ and $\cB$ by $d_1$ and $d_2$, respectively. By B\'ezout's theorem,
	\[ d_1 d_2 = \sum I(P, \cA\cap \cB).\]
	According to our previous calculation, the right-hand side of it equals $0$, whence one of $d_1$ and $d_2$ has to be $0$. Therefore, $\cV$ is absolutely irreducible and this completes the proof.
\end{proof}

\subsection{Further results and open problems}
We investigate further the curves of the previous part in order to show that $\cC_n$ is an MRD code if and only if a certain rank-metric code of dimension $2n$ over $\F_q$ in  $\F_q^{(n-1)\times n}$ is an MRD code.
Assume that ${H(1,X,Y)}/{W(1,X,Y)}=0$. By \eqref{eq:curve_F},
\[(Y^q-Y)^{q^2-1}+ (Y^q-Y)^{q-1}= (X^q-X)^{q^2-1} + (X^q-X)^{q-1}.  \]
If we set
\begin{equation}
\label{bar}
\overline{X} = X^q-X \mbox{ and } \overline{Y}=Y^q-Y,
\end{equation}
then it becomes
\begin{equation}\label{eq:Y1Y2}
\overline{Y}^{q^2-1} + \overline{Y}^{q-1}= \overline{X}^{q^2-1} + \overline{X}^{q-1}.
\end{equation}

Hence, $\cH$ contains no $\F_{q^n}$-rational points besides those on $\cW$ if and only if
every $\F_{q^n}$-rational point $\la (1,x,y)\ra$ on the curve defined by the affine equation \eqref{eq:Y1Y2} satisfies $(x^q-x)^{q-1}=(y^q-y)^{q-1}$.

Assume that $\overline{Y}^{q^2-1} + \overline{Y}^{q-1}= \overline{X}^{q^2-1} + \overline{X}^{q-1}=-t$, for some $t\in \F_{q^n}$. It follows that $\overline{X}$ and $\overline{Y}$ are both roots of
\begin{equation}\label{eq:Y}
Z^{q^2} + Z^{q} + tZ\in \F_{q^n}[Z].
\end{equation}
The polynomial \eqref{eq:Y} has at most $q^2$ roots.
If \eqref{eq:Y} has $q$ roots, then for any two non-zero roots, $z_1$ and $z_2$, it holds that $z_1^{q-1}= z_2^{q-1}$. This implies that the corresponding point $\la(1,x,y)\ra$ is on $\cW$.
If the polynomial \eqref{eq:Y} has $q^2$ roots, then there always exist two of them, $z_1$ and $z_2$, which are $\F_q$-linearly independent, or equivalently $z_1^{q-1}\neq z_2^{q-1}$. By \eqref{bar} the roots of \eqref{eq:Y} have to be in $\{x \colon \Tr_{q^n/q}(x)=0\}$.
Hence, $z_1^{q-1}= z_2^{q-1}$ holds for any two roots $z_1$ and $z_2$ of \eqref{eq:Y} if and only if \eqref{eq:Y} has at most $q$ roots in $\F_{q^n}$ with trace zero over $\F_q$.

Therefore, we have proved the following result.
\begin{proposition}\label{prop:t_eq}
	The set of linearized polynomials $\cC_n$ is an MRD code if and only if \eqref{eq:Y}
	has at most $q$ roots in $\{y\in \F_{q^n} \colon \Tr_{q^n/q}(y)=0\}$ for each $t\in \F_{q^n}$.
\end{proposition}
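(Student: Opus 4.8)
The plan is to read the statement off the chain of equivalences already assembled above, the only genuinely new ingredient being careful bookkeeping of roots inside the trace-zero hyperplane. First I would fix the starting point: combining Theorem~\ref{confront} (applied with $\cS=\{0,1,3\}$ and $\cT=\{0,1,2\}$, so that $\cC_{\cT}$ is the Gabidulin code $\cG_{3,1}$) with the reduction carried out just above, $\cC_n$ is an MRD code if and only if every $\F_{q^n}$-rational solution $(x,y)$ of \eqref{eq:Y1Y2} satisfies $\overline{x}^{q-1}=\overline{y}^{q-1}$, where $\overline{x}=x^q-x$ and $\overline{y}=y^q-y$ as in \eqref{bar} (the points with $\overline{x}=0$ or $\overline{y}=0$ already lying on $\cW$, since $W(1,x,y)$ factors through $\overline{x}\,\overline{y}(\overline{y}^{q-1}-\overline{x}^{q-1})$).

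Next I would translate \eqref{eq:Y1Y2} into the root language of \eqref{eq:Y}. Given such a solution, let $-t$ denote the common value of the two sides of \eqref{eq:Y1Y2}; multiplying $\overline{x}^{q^2-1}+\overline{x}^{q-1}=-t$ by $\overline{x}$ (and likewise for $\overline{y}$) shows that both $\overline{x}$ and $\overline{y}$ are roots of $Z^{q^2}+Z^q+tZ$, i.e.\ of \eqref{eq:Y}. Two facts then do the work: (i) the map $Z\mapsto Z^{q^2}+Z^q+tZ$ is $\F_q$-linear, so its root set $R$ is an $\F_q$-subspace of dimension at most $2$, and two nonzero roots $z_1,z_2$ satisfy $z_1^{q-1}=z_2^{q-1}$ exactly when they are $\F_q$-linearly dependent; (ii) by the additive Hilbert~90, the elements expressible as $x^q-x$ are precisely those of the trace-zero hyperplane $T:=\{w\in\F_{q^n}\colon \Tr_{q^n/q}(w)=0\}$, so the roots $\overline{x},\overline{y}$ always lie in $R\cap T$.

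The two implications then follow by comparing $\dim_{\F_q}(R\cap T)$ with $1$. If \eqref{eq:Y} has more than $q$ roots in $T$ for some $t$, then $\dim_{\F_q}(R\cap T)=2$, and I can pick two $\F_q$-independent (hence nonzero) roots $z_1,z_2\in T$; writing $z_1=x^q-x$ and $z_2=y^q-y$ produces an $\F_{q^n}$-solution of \eqref{eq:Y1Y2} with $\overline{x}^{q-1}\neq\overline{y}^{q-1}$, so $\cC_n$ is not MRD. Conversely, if \eqref{eq:Y} has at most $q$ roots in $T$ for every $t$, then $\dim_{\F_q}(R\cap T)\le 1$, so for any solution $(x,y)$ of \eqref{eq:Y1Y2} the elements $\overline{x},\overline{y}\in R\cap T$ are $\F_q$-dependent; either one of them vanishes (and the point lies on $\cW$) or both are nonzero and $\overline{x}^{q-1}=\overline{y}^{q-1}$, so no point lies on $\cH\setminus\cW$ and $\cC_n$ is MRD.

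I expect the only real subtlety, and the reason the statement counts roots in $T$ rather than all $q^2$ roots of \eqref{eq:Y}, to be exactly point (ii): a spurious pair of $\F_q$-independent roots lying outside $T$ would never arise as $(\overline{x},\overline{y})$, so it is the intersection $R\cap T$, not $R$ itself, that governs the MRD property. The one other place where a little care is needed is keeping the degenerate cases $\overline{x}=0$ or $\overline{y}=0$ on the $\cW$ side of the dichotomy, rather than trying to enforce $\overline{x}^{q-1}=\overline{y}^{q-1}$ literally there.
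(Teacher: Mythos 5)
Your argument is correct and follows essentially the same route as the paper: pass from the curve condition to \eqref{eq:Y1Y2}, observe that $\overline{x},\overline{y}$ are roots of \eqref{eq:Y} lying in the trace-zero hyperplane, and decide MRD-ness by whether two $\F_q$-independent such roots can occur. Your explicit use of additive Hilbert 90 to realize any pair of independent trace-zero roots as $(\overline{x},\overline{y})$ just makes precise a step the paper leaves implicit.
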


Proposition \ref{prop:t_eq} shows us that $\cC_n$ is an MRD if and only if  $\{aX+ bX^q+bX^{q^2} \colon a,b\in \F_{q^n}  \}$ with restriction to the $(n-1)$-dimensional $\F_q$-subspace $\{x\in \F_{q^n} \colon \Tr_{q^n/q}(x)=0\}$ of $\F_{q^n}$ is an MRD code of size $q^{2n}$ in $\F_{q}^{(n-1)\times n}$.


Besides the adjoint and Delsarte dual operations there is another operation which preserve the maximality of the idealisers of certain families of MRD codes and can be used to produce possibly new families:

\begin{proposition}\label{propos}
	Fix a prime power $q$ and an integer $n$.
	The set
	\begin{equation}
	\label{oldcode}
	\{a_0 X^{\sigma^{t_0}}+a_1 X^{\sigma^{t_1}}+\ldots+a_{k-1}X^{\sigma^{t_{k-1}}}\colon a_0,a_1,\ldots,a_{k-1}\in \F_{q^{nm}}\}
	\end{equation}
	with $\sigma=q^{ms}$, $\gcd(s,n)=1$ is an $\F_{q^m}$-linear MRD code for every positive integer $m$  if and only if
	\begin{equation}
	\label{newcode}
	\{a_0 X^{\tau^{t_0}}+a_1 X^{\tau^{t_1}}+\ldots+a_{k-1}X^{\tau^{t_{k-1}}}\colon a_0,a_1,\ldots,a_{k-1}\in \F_{q^{nm}}\}
	\end{equation}
	with $\tau=q^{mt}$, $\gcd(t,n)=1$ is an $\F_{q^m}$-linear MRD code for every positive integer $m$.
\end{proposition}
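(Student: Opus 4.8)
The plan is to reduce the statement, via Theorem~\ref{connection}, to a purely field-theoretic comparison and then to exploit the hypothesis ``for every $m$'' through a linear-disjointness argument. For a unit $a$ modulo $n$ (that is, $\gcd(a,n)=1$) and a positive integer $\ell$, let me abbreviate by $P(a)$ the assertion that the code
\[
\left\{\sum_{j=0}^{k-1} c_j X^{q^{\ell a t_j}}\colon c_j\in\F_{q^{n\ell}}\right\}
\]
is an $\F_{q^\ell}$-linear MRD code for every $\ell\ge 1$. With $\sigma=q^{ms}$ the left-hand code of the proposition is exactly the one defining $P(s)$, and the right-hand code defines $P(t)$; hence it suffices to prove $P(s)\iff P(t)$. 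By Theorem~\ref{connection} (applied over the base field $\F_{q^\ell}$ with generator $(q^\ell)^a=q^{\ell a}$, legitimate since $\gcd(a,n)=1$), the code defining $P(a)$ is MRD for a given $\ell$ precisely when every $A=\{\alpha_0,\dots,\alpha_{k-1}\}\subseteq\F_{q^{n\ell}}$ with $\det(M_{\cT,A,q^{\ell a}})=0$ consists of $\F_{q^\ell}$-linearly dependent elements; call this condition $R(a,\ell)$, so that $P(a)$ reads ``$R(a,\ell)$ for all $\ell$''.

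The key step is the following transfer lemma: for units $a,\mu$ modulo $n$ and every $\ell$,
\[
R(\mu,\ell a)\ \Longrightarrow\ R(a\mu,\ell).
\]
I would prove it by noting that the two conditions involve the very same Moore matrix. Indeed $R(\mu,\ell a)$ uses the generator $(q^{\ell a})^{\mu}=q^{\ell a\mu}$ over the base $\F_{q^{\ell a}}$, while $R(a\mu,\ell)$ uses $(q^\ell)^{a\mu}=q^{\ell a\mu}$ over the base $\F_{q^\ell}$; in both, the $(i,j)$-entry of the Moore matrix is $\alpha_i^{q^{\ell a\mu t_j}}$. Now take $A\subseteq\F_{q^{n\ell}}$ with $\det(M_{\cT,A,q^{\ell a\mu}})=0$. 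Since $\F_{q^{n\ell}}\subseteq\F_{q^{n\ell a}}$, condition $R(\mu,\ell a)$ applies and yields that the $\alpha_i$ are $\F_{q^{\ell a}}$-linearly dependent. Because $\gcd(\ell a,\,n\ell)=\ell$, the fields $\F_{q^{\ell a}}$ and $\F_{q^{n\ell}}$ are linearly disjoint over $\F_{q^\ell}$, so any $\F_{q^\ell}$-linearly independent subset of $\F_{q^{n\ell}}$ stays $\F_{q^{\ell a}}$-linearly independent; contrapositively, the $\F_{q^{\ell a}}$-dependence of the $\alpha_i$ forces their $\F_{q^\ell}$-dependence, which is exactly $R(a\mu,\ell)$.

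From the lemma the proposition follows by a bootstrapping argument. Taking $\mu=a^{j}$ gives $R(a^{j},\ell a)\Rightarrow R(a^{j+1},\ell)$ for all $\ell$, whence $P(a^{j})\Rightarrow P(a^{j+1})$. If $r=\mathrm{ord}_n(a)$ then $a^{r}\equiv 1\pmod n$, and since on $\F_{q^{n\ell}}$ the automorphism $q^{\ell a^{r}}$ coincides with $q^{\ell}$, one has $R(a^{r},\ell)\iff R(1,\ell)$ and therefore $P(a^{r})=P(1)$. The chain $P(1)\Rightarrow P(a)\Rightarrow\cdots\Rightarrow P(a^{r})=P(1)$ closes into a cycle, so $P(1)\iff P(a)$ for every unit $a$. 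Applying this to $a=s$ and to $a=t$ gives $P(s)\iff P(1)\iff P(t)$, as required.

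The main obstacle is the passage between the two base fields, and this is precisely where the quantifier ``for every $m$'' is indispensable: over a single field one can only translate the exponent set (Theorem~\ref{th:equivalence}) or negate it (the adjoint), so the general unit-scaling $\cT\mapsto a\cT$ cannot be realised by any equivalence fixing the ground field. The device that overcomes this is to climb to the larger field $\F_{q^{n\ell a}}$, where the same Moore matrix governs a code whose base field $\F_{q^{\ell a}}$ is linearly disjoint from $\F_{q^{n\ell}}$ over $\F_{q^\ell}$; the delicate point to verify carefully is this disjointness (equivalently $\gcd(\ell a,n\ell)=\ell$) together with the standard fact that linear disjointness preserves linear independence. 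I would also stress that the cyclic bootstrapping is genuinely needed: the lemma only ever raises the multiplier, so the reverse implication $P(a)\Rightarrow P(1)$ is recovered not directly but by running once around the cycle generated by $a$ in $(\Z/n\Z)^{\ast}$.
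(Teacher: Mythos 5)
Your proof is correct, and its engine is the same as the paper's: to verify the MRD property for the multiplier $t$ at level $\ell$, you invoke the hypothesis at a larger level $\ell a$ (a multiple of $\ell$), where the very same polynomial --- equivalently, the same Moore matrix --- is governed by a larger base field, and then descend to $\F_{q^{\ell}}$ using $\gcd(\ell a,n\ell)=\ell$. The paper does exactly this in one step: with $z\equiv s^{-1}\pmod n$ it applies the hypothesis for $\sigma$ at level $ztm$, observes that $q^{sztm}$ restricts to $q^{tm}$ on $\F_{q^{nm}}$, and performs the descent by citing \cite[Lemma 3.1]{lunardon_generalized_2015}, which is the subspace version of your linear-disjointness statement. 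The genuine differences are organizational: you phrase the MRD condition through Moore determinants via Theorem \ref{connection} rather than through root spaces; you prove the descent from scratch via the linear disjointness of $\F_{q^{\ell a}}$ and $\F_{q^{n\ell}}$ over $\F_{q^{\ell}}$ (correct and self-contained, since the intersection of the two subfields is $\F_{q^{\gcd(\ell a,n\ell)}}=\F_{q^{\ell}}$); and you reach $P(s)\iff P(t)$ by cycling through powers of a unit so as to pass via $P(1)$. That last detour is avoidable: your transfer lemma applied once with $\mu=s$ and $a\equiv ts^{-1}\pmod n$ already gives $R(s,\ell a)\Rightarrow R(t,\ell)$ for all $\ell$, i.e.\ $P(s)\Rightarrow P(t)$ directly, which is precisely the paper's route; so, contrary to your closing remark, the cyclic bootstrapping is not genuinely needed, only a convenient way to package the reverse implication.
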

\begin{proof}
	Suppose that the condition holds for the codes defined by \eqref{oldcode}.
	Let $z$ denote the multiplicative inverse of $s$ modulo $n$, let $m$ and $t$ be any positive integers with $\gcd(t,n)=1$. By our assumption
	\begin{equation}
	\label{step1}
	\{a_0 X^{\sigma^{t_0}}+a_1 X^{\sigma^{t_1}}+\ldots+a_{k-1}X^{\sigma^{t_{k-1}}}\colon a_0,a_1,\ldots,a_{k-1}\in \F_{q^{nztm}}\}
	\end{equation}
	with $\sigma=q^{sztm}$, is an $\F_{q^{ztm}}$-linear MRD code.
	Equivalently, the elements of \eqref{step1} have kernels of dimension at most $k-1$ over $\F_{q^{ztm}}$. Let $U$ be the $\F_{q^{ztm}}$-subspace of the roots in  $\F_{q^{nztm}}$ of
	$f(X):=a_0 X^{\sigma^{t_0}}+a_1 X^{\sigma^{t_1}}+\ldots+a_{k-1}X^{\sigma^{t_{k-1}}}$ for some $a_0,a_1,\ldots, a_{k-1}\in \F_{q^{nm}}$. The polynomial $f$ is in \eqref{step1}, thus the dimension of $U$ over $\F_{q^{ztm}}$ is at most $k-1$.
	The $\F_{q^m}$-subspace of the roots of $f$ in the field $\F_{q^{nm}}$ is $U \cap \F_{q^{nm}}$. Since $\gcd(zt,n)=1$, according to \cite[Lemma 3.1]{lunardon_generalized_2015},
	the dimension over $\F_{q^m}$ of $U \cap \F_{q^{nm}}$ is at most $k-1$ and hence
	\[
	\{a_0 X^{\tau^{t_0}}+a_1 X^{\tau^{t_1}}+\ldots+a_{k-1}X^{\tau^{t_{k-1}}}\colon a_0,a_1,\ldots,a_{k-1}\in \F_{q^{nm}}\}\]
	with $\tau=q^{sztm}$ is an $\F_{q^m}$-linear MRD code. Since $sz \equiv 1 \pmod n$ this is the same code as the one defined in \eqref{newcode}.
\end{proof}

Now, assume that the monomials
$X^{\sigma^{t_0}},X^{\sigma^{t_1}},\ldots,X^{\sigma^{t_{k-1}}}$,
where $\sigma=q^{s}$ and $\gcd(s,n)=1$,
define an MRD code $\mathcal{C}_{\sigma,t_0,\ldots,t_{k-1}}$ over every extension $\mathbb{F}_{q^{mn}}$ of $\mathbb{F}_{q^{n}}$.
Then  Proposition \ref{propos} guarantees that $\mathcal{C}_{\tau,t_0,\ldots,t_{k-1}}$ is an MRD code over $\mathbb{F}_{q^{mn}}$ as well, with $\tau=q^{t}$ for any positive integer $t$ such that $\gcd(t,n)=1$.

It is easy to see that for generalized Gabidulin codes and for $\cC_7$ and $\cC_8$, the condition in Proposition \ref{propos} holds. If we apply Proposition \ref{propos} to $\cC_7$ or $\cC_8$, the resulting codes can be obtained also via the adjoint operation.

\begin{question}
	Is there any family of $n\times n$ MRD codes with maximum idealisers such that the condition in Proposition \ref{propos} does not hold?
\end{question}

To conclude our paper, we would like to present two open questions concerning the asymptotic behavior of MRD codes with maximum idealisers.
\begin{question}\label{ques:bence}
Is it true that for any positive integer $k$, there exists a constant $c$, depending only on $k$, such that for $n>c$ the set of linearized polynomials
	\[\left\{\sum_{i=0}^{k-1} a_i x^{q^{t_i}} \colon a_0,\ldots,a_{k-1} \in \F_{q^n}\right\}\]
	is an MRD code only if $t_0,\ldots,t_{k-1}$ is an arithmetic progression of $\mathbb{Z}_n$?
\end{question}

If the answer to Question \ref{ques:bence} is negative, then it is natural to ask the following, weaker question.
\begin{question}\label{ques:yue}
	Is it true that, for any $k$ distinct positive integers $t_0,t_1,\ldots, t_{k-1}$ which do not form an arithmetic progression of $\mathbb{Z}$, there exists a constant $c$ such that for $n>c$ the set of linearized polynomials
	\[\left\{\sum_{i=0}^{k-1} a_i x^{q^{t_i}} \colon a_0,\ldots,a_{k-1} \in \F_{q^n}\right\}\]
	is not MRD?
\end{question}

\begin{remark}
Recently, in \cite{BZ2} Bartoli, jointly with the fourth author of the paper,  analyzed Questions 4.6 and 4.7. In particular, they provide an affirmative answer to Question 4.7 for $q>5$, and a partial result for $q=2,3,4$ and $5$.
These results also yield classification of some special type of linear sets in \cite{NPZZ}.
\end{remark}

%

\end{document}